\newtheorem{theorem}{Theorem}[section]
\newtheorem{proposition}[theorem]{Proposition}
\newtheorem{corollary}[theorem]{Corollary}
\newtheorem{lemma}[theorem]{Lemma}
\newtheorem{algorithm}[theorem]{Algorithm}
\theoremstyle{remark}
\newtheorem{example}[theorem]{Example}
\newtheorem{remark}[theorem]{Remark}
\def\sA{\mathscr{A}}
\def\sB{{\mathscr{B}}}
\def\e{{\mathrm e}}
\def\g{{\mathrm g}}
\def\m{{\mathrm m}}
\def\t{{\mathrm t}}
\def\F{{\mathrm F}}
\def\G{{\mathrm G}}
\def\M{{\mathrm M}}
\def\rR{{\mathrm R}}
\def\Cad{{\mathrm {Cad}}}
\def\PF{{\mathrm {PF}}}
\def\Ap{{\mathrm{ Ap}}}
\def\max{{\mathrm{ max}}}
\def\MED{{\mathrm{MED}}}
\def\gcd{{\mathrm{gcd}}}
\def\max{{\mathrm{max}}}
\def\msg{{\mathrm{ msg }}}
\def\Maximals{\mathrm{Maximals}_{\leq_S}}
\def\CC{\mathscr{C}}
\def\N{\mathbb{N}}
\def\Z{\mathbb{Z}}
\def\rank{\mathrm{rank}\, }
\def\Ap{\mathrm{Ap}}
\def\SG{\mathrm{SG}}
\def\int{\mathrm{int}}
\title{The covariety of numerical semigroups with  fixed Frobenius number}
\author{
M. A. Moreno-Fr\'{\i}as \footnote{
    Dpto. de Matem\'aticas, Facultad de Ciencias,
    Universidad de C\'adiz, E-11510, Puerto Real  (C\'{a}diz, Spain).
    Partially supported by  Junta de Andaluc\'{\i}a group FQM-298 and by ProyExcel-00868.
     E-mail: mariangeles.moreno@uca.es.}
\and
 J. C. Rosales \footnote{
    Dpto. de \'Algebra, Facultad de Ciencias, Universidad de Granada,
    E-18071, Granada. (Spain).
    Partially supported by  Junta de Andaluc\'{\i}a group FQM-343 and by ProyExcel-00868.
    E-mail: jrosales@ugr.es.
}
 }
\date{}
\begin{document}

\maketitle

\begin{abstract}
	
	Denote by $\m(S)$ the multiplicity of a numerical semigroup $S$.  A {\it covariety}  is a nonempty family $\CC$ of numerical semigroups that  fulfills  the following conditions: there is the minimum of $\CC,$ the intersection of two elements of  $\CC$  is again an element of  $\CC$ and $S\backslash \{\m(S)\}\in  \CC$  for all $S\in  \CC$ such that  $S\neq \min(\CC).$ In this work we describe an algorithmic procedure to compute all the elements of $\CC.$ We prove that there exists the smallest element of $\CC$ containing a set of positive integers.  We show that  
	 $\sA(F)=\{S\mid S \mbox{ is a numerical semigroup with Frobenius number }F\}$ is a covariety, and we particularize   the previous results in this covariety. Finally, we will see that there is the smallest covariety containing a finite set of numerical semigroups.

\smallskip
    {\small \emph{Keywords:} Numerical semigroup, 
     covariety, Frobenius number, genus,  rank,  multiplicity,  algorithm. }

    \smallskip
    {\small \emph{MSC-class:} 20M14, 11D07 }
\end{abstract}

\section{Introduction}

\hspace{0.42cm}Let $\Z$ be the set of integers and $\N=\{z\in \Z \mid z\ge 0\}$. A {\it submonoid} of $(\N,+)$ is a subset  of $\N$ which is closed under addition and contains the element $0.$ A {\it numerical semigroup} is a
submonoid $S$ of $(\N,+)$ such that $\N\backslash
S=\{x\in \N \mid x \notin S\}$ has finitely many elements.

If $S$ is a numerical semigroup, then $\m(S)=\min(S\backslash \{0\})$, $\F(S)=\max\{z\in \Z \mid z \notin S\}$ and $\g(S)=\sharp(\N \backslash S)$ (where $\sharp X $ denotes the cardinality of a set $X$) are 
three important invariants of $S$  and we call them the {\it multiplicity}, the {\it Frobenius number} and the {\it genus} of $S$, respectively.

If $A$ is a subset nonempty of $\N$, we denote by $\langle A
\rangle$ the submonoid of $(\N,+)$ generated by $A$, that is,
$\langle A \rangle=\{\lambda_1a_1+\dots+\lambda_na_n \mid n\in
\N\backslash \{0\}, \, \{a_1,\dots, a_n\}\subseteq A \mbox{ and }
\{\lambda_1,\dots,\lambda_n\}\subseteq \N\}.$  In \cite[Lema 2.1]{libro}) it is shown that $ \langle A \rangle$
is a numerical semigroup if and only if $\gcd(A)=1.$ 

If $M$ is a submonoid of $(\N,+)$ and $M=\langle A \rangle$, then we
say that $A$ is a {\it system of generators} of $M$. Moreover, if $M\neq
\langle B \rangle$ for all $B \varsubsetneq A$, then we will say
that $A$ is a {\it minimal system of generators} of $M$. In
\cite[Corollary 2.8]{libro} is shown that every submonoid of
$(\N,+)$ has a unique minimal system of generators, which in
addition is finite. We denote by $\msg(M)$ the minimal system of
generators of $M$. The cardinality of $\msg(M)$ is called the {\it
	embedding dimension }of $M$ and will be denoted by $\e(M).$

The Frobenius problem (see \cite{alfonsin})  focuses on finding formulas to calculate the Frobenius number and the genus of a numerical semigroup from its minimal  system of generators. The problem was solved in \cite{sylvester} for numerical semigroups with embedding dimension two.  Nowadays, the problem is still open in the case of numerical semigroups with embedding dimension  greater than or equal to three. Furthemore, in this case the problem of computing the Frobenius number of a general numerical semigroup becomes NP-hard.

If $F\in \N\backslash \{0\},$ then we consider the set $\sA(F)=\{S\mid S \mbox{ is a numerical semi-}\\\mbox{group and } \F(S)=F\}.$ The study of set  $\sA(F)$ has been the origin and motivation of this work.

The generalization of  $\sA(F)$ as a family of numerical semigroups that verifies certain properties leads us to give the following definition. A {\it covariety}  is a nonempty family $\CC$ of numerical semigroups that  fulfills  the following conditions:
\begin{enumerate}
	\item[1)]  $\CC$ has a minimum,  $\Delta(\CC)=\min(\CC)$ (with respect to set inclusion).
	\item[2)] If $\{S, T\} \subseteq \CC$, then $S \cap  T \in \CC$.
	\item[3)]  If $S \in \CC$  and $S \neq  \Delta(\CC)$, then $S \backslash \{\m(S)\} \in \CC$.
\end{enumerate}

The elements of $\CC$ are called $\CC${\it -semigroups}. 

In Section 2, we will see that every covariety is finite and its elements can be ordered making a tree. Moreover, we present a characterization of the children of an arbitrary vertex in this tree. This fact will allow us, in Section 3, to give a quite efficient algorithm to calculate all the elements of $\sA(F).$

If $\CC$ is a covariety, then it has a finite number $S_1,\dots,S_n$ of maximal elements. A $\CC$-{\it set} is a subset of $S_i\backslash \Delta(\CC)$ for some $i\in \{1,\dots, n\}.$ In Section 4, we will show that if $A$ is a $\CC$-set, then there is the smallest element of $\CC$ that contains $A.$ This element will be denoted by $\CC(A)$ and it will called the $\CC${\it -semigroup generated by }$A.$

If $S=\CC(A),$ then we will say that $A$ is a $\CC$-{\it system of generators} of $S.$ Furthemore, if $S\neq
\CC(B)$ for all $B \varsubsetneq A$, then we will say
that $A$ is a $\CC$-{\it minimal system of generators} of $S.$ In Section 4, we will show an example of covariety $\CC$ where the $\CC$-minimal system of generators is not unique. Also we will prove, in this section, that every $\sA(F)$-semigroup admits a unique $\sA(F)$-minimal system of generators. 

In Section 5, we will talk about the smallest covariety that contains a finite family of numerical semigroups, and we will give an algorithmic procedure to compute it.

If $\CC$ is a covariety and $S$ is a $\CC$-semigroup, then the $\CC$-{\it rank} of $\CC$  is $\CC_{\mbox{rank}}(S)=\min\{\ \sharp A\mid A \mbox{ is a }\CC\mbox{-set and }\CC(A)=S \}.$ In Section 6, we will see that $\CC_{\mbox{rank}}(S)\leq \e(S)$ and $\CC_{\mbox{rank}}(S)=0$ if and only if $S=\Delta(\CC).$

In Section 6, we will also see that if $S\in \sA(F)$, then $\sA(F)_{\mbox{rank}}(S)\leq \m(S)-1.$ We will study the $\sA(F)$-semigroups $S$ such that  $\sA(F)_{\mbox{rank}}(S)\in \{1, \m(S)-1\}$ and we will find formulas to calculate the genus of this kind of numerical semigroups, depending on its $\sA(F)$-minimal system of generators.

\section{The tree associated to a covariety}

\hspace{0.42cm} If $T$ is a numerical semigroup, then $\N\backslash T$ is a finite set and so we have the following result.
\begin{lemma}\label{lemma1}
If $T$ is a numerical semigroup, then the set $$\{ S \mid S \mbox{ is numerical semigroup and }T\subseteq S\}$$ is  finite.
\end{lemma}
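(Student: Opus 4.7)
The plan is to reduce the statement to a trivial counting argument: a numerical semigroup $S$ is completely determined by its complement $\N\setminus S$ in $\N$, so it suffices to bound the number of possible complements.

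First I would observe that for any numerical semigroup $S$ containing $T$, we have the inclusion $\N\setminus S \subseteq \N\setminus T$. Indeed, if $x\in \N$ and $x\notin S$, then certainly $x\notin T$ because $T\subseteq S$. By the definition of a numerical semigroup, $\N\setminus T$ is a finite set (its cardinality is $\g(T)$), so $\N\setminus S$ is a subset of a fixed finite set.

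Next I would note that the map $S\mapsto \N\setminus S$ is injective on the family in question, and its image is contained in the power set $\mathcal{P}(\N\setminus T)$. Consequently,
\[
\sharp\,\{S \mid S \text{ is a numerical semigroup and } T\subseteq S\} \;\leq\; \sharp\,\mathcal{P}(\N\setminus T) \;=\; 2^{\g(T)},
\]
which is finite. This finishes the proof.

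There is no real obstacle here: the argument is a one-liner once one writes down that $S$ is determined by $\N\setminus S$ and that $\N\setminus S\subseteq \N\setminus T$. The only thing to be a bit careful about is not to claim that every subset of $\N\setminus T$ arises as $\N\setminus S$ for some numerical semigroup $S\supseteq T$ (it does not, since $S$ must be closed under addition); we only need the upper bound, which is all that finiteness requires.
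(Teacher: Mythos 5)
Your proof is correct and matches the paper's approach: the paper simply remarks that $\N\setminus T$ is finite and leaves the rest implicit, which is exactly the injection $S\mapsto \N\setminus S$ into the (finite) power set of $\N\setminus T$ that you spell out. No issues.
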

\begin{proposition}\label{proposition2}
Every covariety has finite cardinality.	
\end{proposition}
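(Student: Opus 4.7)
The plan is to reduce the finiteness of $\CC$ to Lemma~\ref{lemma1} by using the minimum element $\Delta(\CC)$ as the reference semigroup. Since $\CC$ is nonempty and $\Delta(\CC) = \min(\CC)$ is itself a numerical semigroup (as an element of the family $\CC$ of numerical semigroups), it makes sense to apply Lemma~\ref{lemma1} with $T = \Delta(\CC)$.

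First I would observe that condition~1) in the definition of covariety gives, for every $S \in \CC$, the containment $\Delta(\CC) \subseteq S$ (the minimum is taken with respect to set inclusion). Hence
\[
\CC \subseteq \{\, S \mid S \text{ is a numerical semigroup and } \Delta(\CC) \subseteq S\,\}.
\]
Then Lemma~\ref{lemma1}, applied to the numerical semigroup $T = \Delta(\CC)$, tells us that the right-hand side is finite. Consequently $\CC$ is a subset of a finite set and therefore finite.

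There is essentially no obstacle here: conditions~2) and~3) of a covariety are not even needed for this particular statement (they will matter later for building the tree structure). The only subtlety worth spelling out is that $\Delta(\CC)$ does belong to $\CC$ by definition of minimum, so applying Lemma~\ref{lemma1} to it is legitimate, and that the inclusion order used for the minimum in~1) is precisely the inclusion needed to invoke Lemma~\ref{lemma1}.
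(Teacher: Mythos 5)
Your proof is correct and follows exactly the same route as the paper: both reduce to Lemma~\ref{lemma1} applied with $T=\Delta(\CC)$ via the inclusion $\CC \subseteq \{S \mid S \text{ is a numerical semigroup and } \Delta(\CC)\subseteq S\}$. The extra remarks you add (that only condition~1) is needed, and that $\Delta(\CC)\in\CC$) are accurate but not essential.
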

\begin{proof}
	If $\CC$ is a covariety, then $\CC \subseteq \{S \mid S \mbox{ is a numerical semigroup and }\Delta(\CC)\subseteq S\}.$ By Lemma \ref{lemma1}, we know that $\{S \mid S \mbox{ is a numerical semigroup and }\Delta(\CC)\subseteq S\}$ is a finite set. Therefore $\CC$ is  finite.
\end{proof}

The following result is well known and it is easy to prove. 

\begin{lemma}\label{lemma3}
	Let $S$ and $T$ be numerical semigroups and $x\in S.$ Then
	\begin{enumerate}
		\item $S\cap T$ is also a numerical semigroup and $\F(S\cap T)=\max\{\F(S), \F(T)\}.$
		\item $S\backslash \{x\}$ is a numerical semigroup if and only if $x\in \msg(S).$
		\item $\m(S)=\min\left( \msg(S)\right).$
	\end{enumerate}
\end{lemma}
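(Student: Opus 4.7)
The plan is to handle the three parts independently, as each is a short, essentially self-contained observation once the right elementary fact about submonoids of $(\N,+)$ is used.

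For part (1), I would first check that $S\cap T$ is a submonoid: it contains $0$ and is closed under addition because both $S$ and $T$ are. To see it is a numerical semigroup, I would write $\N\setminus(S\cap T)=(\N\setminus S)\cup(\N\setminus T)$, which is finite as the union of two finite sets. For the Frobenius number, I would argue the two inequalities: any integer strictly larger than $\max\{\F(S),\F(T)\}$ lies in both $S$ and $T$ hence in $S\cap T$, so $\F(S\cap T)\leq \max\{\F(S),\F(T)\}$; conversely $\max\{\F(S),\F(T)\}$ itself fails to lie in at least one of $S$ or $T$, hence is missing from $S\cap T$.

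For part (2), I would argue both directions by contraposition using only the definition of minimal generator. If $x\notin \msg(S)$, then either $x=0$, in which case $S\setminus\{x\}$ is not even a submonoid, or $x$ admits a nontrivial decomposition $x=a+b$ with $a,b\in S\setminus\{0\}$; in the latter case $a,b\in S\setminus\{x\}$ but $a+b=x\notin S\setminus\{x\}$, destroying closure. Conversely, if $x\in \msg(S)$, then $x\neq 0$ so $0\in S\setminus\{x\}$, the complement in $\N$ is still cofinite, and the only way addition could fail is to have $a,b\in S\setminus\{x\}$ with $a+b=x$; but then $x$ would be writable as a sum of two strictly smaller nonzero elements of $S$, contradicting its minimality in $\msg(S)$.

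For part (3), I would write $m=\m(S)=\min(S\setminus\{0\})$ and note that $m$ cannot be decomposed as $m=a+b$ with $a,b\in S\setminus\{0\}$, since such $a,b$ would be strictly smaller positive elements of $S$. Hence $m\in \msg(S)$. Since every element of $\msg(S)$ lies in $S\setminus\{0\}$ and $m$ is the minimum of this set, $m\leq \min(\msg(S))\leq m$, giving the equality.

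No step looks like a genuine obstacle; the only point that requires a moment of care is part (2), where one has to remember to separately rule out $x=0$ and to use the characterization of $\msg(S)$ as the set of elements of $S\setminus\{0\}$ that admit no nontrivial decomposition within $S\setminus\{0\}$.
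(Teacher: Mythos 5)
Your proof is correct and complete; the paper states this lemma without proof, labelling it as well known, and your argument is exactly the standard elementary one that the authors are implicitly relying on. The only ingredient you use beyond the definitions is the characterization of $\msg(S)$ as the set of elements of $S\backslash\{0\}$ that admit no decomposition as a sum of two nonzero elements of $S$, which is the content of the result cited by the paper from \cite{libro}, so nothing is missing.
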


As a consequence of this lemma we have the following result.

\begin{proposition}\label{proposition4}
	If $F\in \N \backslash \{0\}$, then $\sA(F)$ is a covariety. Moreover, $$\Delta(\sA(F))=\{0,F+1,\rightarrow\},$$ where the symbol $\rightarrow$ means that every integer greater than $F+1$ belongs to the set.
\end{proposition}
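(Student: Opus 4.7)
The plan is to verify in turn each of the three defining properties of a covariety for the family $\sA(F)$, after first showing that the set $\Delta := \{0, F+1, \rightarrow\}$ is indeed a numerical semigroup with Frobenius number $F$ (so $\sA(F)$ is nonempty) and then identifying it as the minimum. The checks are short, but care is needed at the multiplicity-removal step, which is where I expect the only subtlety to arise.

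First I would observe that $\Delta$ is closed under addition (the sum of $0$ and anything is harmless; the sum of two integers $\geq F+1$ is $\geq F+2 \geq F+1$), contains $0$, and has finite complement $\{1,2,\dots,F\}$, so $\Delta$ is a numerical semigroup with $\F(\Delta)=F$; hence $\Delta \in \sA(F)$. To see that $\Delta$ is the minimum under inclusion, take any $S\in \sA(F)$. Since $\F(S)=F$, every integer greater than $F$ lies in $S$, and of course $0\in S$; therefore $\Delta \subseteq S$, which gives $\Delta=\min \sA(F)$ as claimed.

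Next, for closure under intersection, take $S,T\in \sA(F)$. By part (1) of Lemma \ref{lemma3}, $S\cap T$ is a numerical semigroup with $\F(S\cap T)=\max\{\F(S),\F(T)\}=F$, so $S\cap T\in \sA(F)$.

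The one step requiring a little care is the multiplicity-removal axiom. Let $S\in \sA(F)$ with $S\neq \Delta$. By parts (2) and (3) of Lemma \ref{lemma3}, $\m(S)\in \msg(S)$, so $S\setminus\{\m(S)\}$ is a numerical semigroup. What has to be checked is that its Frobenius number is still $F$, i.e.\ that every integer larger than $F$ remains in the set; equivalently, that $\m(S)\leq F$. Now $\m(S)\neq F$ because $F\notin S$, and if $\m(S)\geq F+1$ then $\{1,\dots,F\}\cap S=\varnothing$, forcing $S=\{0,F+1,\rightarrow\}=\Delta$, contradicting $S\neq \Delta$. Thus $\m(S)<F$, and removing $\m(S)$ leaves every integer $>F$ intact while keeping $F$ out of the set, so $S\setminus\{\m(S)\}\in \sA(F)$. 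This completes the verification of the three axioms and simultaneously establishes the explicit description of $\Delta(\sA(F))$.
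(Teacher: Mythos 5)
Your proof is correct and follows exactly the route the paper intends: the paper states the proposition as an immediate consequence of Lemma \ref{lemma3} without writing out the details, and your argument supplies precisely those details (including the only nontrivial point, that $\m(S)<F$ whenever $S\neq\Delta$, so removing the multiplicity does not change the Frobenius number).
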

A {\it graph} $G$ is a pair $(V,E)$ where $V$ is a nonempty set and
$E$ is a subset of $\{(u,v)\in V\times V \mid u\neq v\}$. The
elements of $V$ and $E$ are called {\it vertices} and {\it edges},
respectively. A {\it path (of
	length $n$)} connecting the vertices $x$ and $y$ of $G$ is a
sequence of different edges of the form $(v_0,v_1),
(v_1,v_2),\ldots,(v_{n-1},v_n)$ such that $v_0=x$ and $v_n=y$.

A graph $G$ is {\it a tree} if there exists a vertex $r$ (known as
{\it the root} of $G$) such that for any other vertex $x$ of $G$
there exists a unique path connecting $x$ and $r$. If  $(u,v)$ is an
edge of the tree $G$, we say that $u$ is a {\it child} of $v$.

Let $\CC$ be  a covariety and let $S\in \CC.$ Define recursively the following sequence of $\CC$-semigroups:
\begin{itemize}
	 \item $S_0=S$,
	 \item $S_{n+1} =\left\{\begin{array}{ll}
	 	S_n\backslash \{\m(S_n)\}  & \mbox{if }   S_n\neq \Delta(\CC),\\
	 	\Delta(\CC) & \mbox{otherwise.}
	 \end{array}
	 \right.$
\end{itemize}

The following result has an immediate proof. 

\begin{lemma}\label{lemma5}
	If $\CC$ is a covariety, $S\in \CC$ and $\{S_n\}_{n\in \N}$ is the  sequence of semigroups defined above, then there is $k\in \N$ such that $\Delta(\CC)=S_k\subsetneq S_{k-1} \subsetneq \dots \subsetneq S_0=S.$ Moreover, the cardinality of $S_i\backslash S_{i+1}$ is equal to $1$ for every $i\in \{0,1,\dots,k-1\}.$ 	
\end{lemma}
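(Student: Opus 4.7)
The plan is to verify that the sequence is well-defined inside $\CC$, then identify a monotone integer-valued quantity that forces termination, and finally read off the claimed chain and cardinality properties directly from the construction.

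First I would show by induction on $n$ that every $S_n$ belongs to $\CC$. The base case $S_0=S\in\CC$ is given. For the inductive step, either $S_n=\Delta(\CC)$, in which case $S_{n+1}=\Delta(\CC)\in\CC$, or $S_n\neq\Delta(\CC)$; in the latter case the third axiom of a covariety guarantees that $S_n\setminus\{\m(S_n)\}\in\CC$. Because $\Delta(\CC)$ is the minimum of $\CC$ with respect to set inclusion, this also gives $\Delta(\CC)\subseteq S_n$ for every $n$.

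Next I would quantify the strict shrinkage. Whenever $S_n\neq\Delta(\CC)$, item (3) of Lemma~\ref{lemma3} asserts $\m(S_n)\in\msg(S_n)$, and item (2) then ensures that $S_n\setminus\{\m(S_n)\}$ is still a numerical semigroup; by construction $|S_n\setminus S_{n+1}|=1$ and $S_{n+1}\subsetneq S_n$. Consequently $\g(S_{n+1})=\g(S_n)+1$ as long as $S_n\neq\Delta(\CC)$. Since $\Delta(\CC)\subseteq S_n$ forces $\N\setminus S_n\subseteq\N\setminus\Delta(\CC)$, the genus is bounded above by $\g(\Delta(\CC))$, a finite number. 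Therefore the process $S_0,S_1,\dots$ cannot strictly decrease forever, so there exists a least index $k\in\N$ with $S_k=\Delta(\CC)$.

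Finally, for $0\le i\le k-1$ we still have $S_i\neq\Delta(\CC)$ (by minimality of $k$), hence $S_{i+1}=S_i\setminus\{\m(S_i)\}$, which simultaneously gives $S_{i+1}\subsetneq S_i$ and $\sharp(S_i\setminus S_{i+1})=1$. Chaining these inclusions yields $\Delta(\CC)=S_k\subsetneq S_{k-1}\subsetneq\cdots\subsetneq S_0=S$, as required. There is no real obstacle here: the only substantive ingredient beyond bookkeeping is the termination argument, and that is settled by the observation that the genus strictly increases along the chain while remaining bounded by $\g(\Delta(\CC))$.
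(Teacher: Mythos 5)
Your proof is correct and is exactly the argument the paper has in mind when it calls the result ``immediate'': induction keeps each $S_n$ in $\CC$, the containment $\Delta(\CC)\subseteq S_n$ bounds the genus by $\g(\Delta(\CC))$ so the strictly decreasing chain must terminate at $\Delta(\CC)$, and the one-element drop at each step is read off the definition. No gaps.
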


If $\CC$ is a covariety, we define the graph $\G(\CC)$ in the following way:
\begin{itemize}
\item the set of vertices of $\G(\CC)$ is $\CC$,
\item $(S,T)\in \CC \times \CC$ is an edge of $\G(\CC)$ if and only if $T=S\backslash \{\m(S)\}.$
	\end{itemize}

As a consequence of Lemma \ref{lemma5}, we have the following result.
\begin{proposition}\label{proposition6}
	If $\CC$ is a covariety, then $\G(\CC)$ is a tree with root $\Delta(\CC).$	
\end{proposition}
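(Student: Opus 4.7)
The plan is to verify directly that $\G(\CC)$ satisfies the tree definition given in the paper: I must exhibit $\Delta(\CC)$ as a vertex such that every other vertex is connected to it by a unique path. Both existence and uniqueness will flow from the observation that in $\G(\CC)$ each non-root vertex has exactly one outgoing edge, while the root has none.

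For existence, I will take an arbitrary $S \in \CC$ with $S \neq \Delta(\CC)$ and simply apply Lemma \ref{lemma5}. That lemma hands me a strictly decreasing chain $S=S_0\supsetneq S_1\supsetneq\dots\supsetneq S_k=\Delta(\CC)$ with $S_{i+1}=S_i\setminus\{\m(S_i)\}$ for every $i<k$. By the very definition of edge in $\G(\CC)$, each $(S_i,S_{i+1})$ is an edge, and since the $S_i$ are pairwise distinct the edges are pairwise distinct as well. Concatenating them produces a path connecting $S$ and $\Delta(\CC)$.

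For uniqueness, the central observation is that the edge set of $\G(\CC)$ is almost functional: given any $T\in\CC$, the only candidate edge starting at $T$ is $(T,T\setminus\{\m(T)\})$. If $T\neq\Delta(\CC)$, this really is an edge (by condition (3) of a covariety). If $T=\Delta(\CC)$, no edge leaves $T$, because $\Delta(\CC)\setminus\{\m(\Delta(\CC))\}$ is a proper subset of $\Delta(\CC)$ and so, by minimality of $\Delta(\CC)$ in $\CC$, cannot belong to $\CC$. From this I will conclude, by induction on the length of a path starting at $S$, that any path $(v_0,v_1),(v_1,v_2),\dots,(v_{n-1},v_n)$ with $v_0=S$ and $v_n=\Delta(\CC)$ is forced: its first edge must be $(S,S\setminus\{\m(S)\})$, so $v_1=S_1$, and the remaining subpath connects $S_1$ to $\Delta(\CC)$ in exactly the same situation with strictly smaller length.

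The proof is essentially a bookkeeping argument on top of Lemma \ref{lemma5}; there is no real obstacle. The only subtle point is the need to justify that no edge starts at $\Delta(\CC)$, and this is exactly where the minimality property of $\Delta(\CC)$ inside the covariety is used, ensuring that the constructed path is not only the natural one but also the only one.
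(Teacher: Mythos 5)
Your proof is correct and follows exactly the route the paper intends: the paper states Proposition \ref{proposition6} as an immediate consequence of Lemma \ref{lemma5}, and your argument just makes explicit the two halves of that deduction (existence of a path via the chain $S=S_0\supsetneq\dots\supsetneq S_k=\Delta(\CC)$, and uniqueness from the fact that each vertex has at most one outgoing edge, with none leaving $\Delta(\CC)$ by minimality). Nothing further is needed.
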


A tree can be  built recurrently starting from the root  and connecting, 
through an edge, the vertices already built with  their  children. Hence, it is very interesting to characterize the children of an arbitrary vertex in a tree.

Following the terminology introduced in \cite{JPAA}, an integer $z$ is a {\it pseudo-Frobenius number} of $S$ if $z\notin S$ and $z+s\in S$ for all $s\in S\backslash \{0\}.$  We denote by $\PF(S)$
the set of pseudo-Frobenius numbers of $S.$ The cardinality of $\PF(S)$ is an important invariant of $S$ (see \cite{froberg} and \cite{barucci}) called the {\it type} of $S,$  denoted by $\t(S).$

The following result is Corollary 2.23 from \cite{libro}. 

\begin{lemma}\label{lemma7}If $S$ is a numerical semigroup and $S\neq \N,$ then $\t(S)\leq \m(S)-1.$
\end{lemma}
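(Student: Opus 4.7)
The plan is to bound $\t(S)$ via the Ap\'ery set with respect to the multiplicity. Let $m=\m(S)$ and consider
$\Ap(S,m)=\{w\in S\mid w-m\notin S\}$,
which, by the standard fact that each residue class modulo $m$ contains a unique least element of $S$, has cardinality exactly $m$; in particular $0\in\Ap(S,m)$.

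The heart of the argument is the well-known correspondence
$$z\in\PF(S)\ \Longleftrightarrow\ z+m\in\Maximals(\Ap(S,m)),$$
where $\leq_S$ is the order on $\Z$ defined by $a\leq_S b$ iff $b-a\in S$. I would prove $(\Rightarrow)$ by noting that $z\in\PF(S)$ forces $z+m\in S$ and $z\notin S$, so $z+m\in\Ap(S,m)$; and if some $w\in\Ap(S,m)$ satisfied $w-(z+m)\in S\setminus\{0\}$, the pseudo-Frobenius property would give $w-m=z+(w-z-m)\in S$, contradicting $w\in\Ap(S,m)$. For $(\Leftarrow)$, given $w\in\Maximals(\Ap(S,m))$ and $s\in S\setminus\{0\}$, one shows $(w-m)+s\in S$: otherwise $w+s\in\Ap(S,m)$, and since $s\in S\setminus\{0\}$ we would have $w<_S w+s$, contradicting maximality.

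The conclusion is then immediate. Since $S\neq\N$, we have $m\geq 2$, so $\Ap(S,m)$ has at least two elements. The element $0\in\Ap(S,m)$ satisfies $0<_S w$ for every other $w\in\Ap(S,m)$ (because $w-0=w\in S$), hence $0$ is not a maximal. Therefore
$$\t(S)=\sharp\PF(S)=\sharp\Maximals(\Ap(S,m))\leq m-1=\m(S)-1.$$

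The main obstacle is the bijection between $\PF(S)$ and $\Maximals(\Ap(S,m))$. Since this is a standard result in the theory of numerical semigroups (available for instance in \cite{libro}), in practice I would simply invoke it; the actual content of the lemma then reduces to the single observation that $0$ is never a maximal of $\Ap(S,m)$ when $S\neq\N$, which accounts for the $-1$ in the bound.
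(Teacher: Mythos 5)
Your proof is correct. The paper does not actually prove this lemma --- it simply cites it as Corollary 2.23 of \cite{libro} --- so there is no internal argument to compare against; your route is the standard one, resting on exactly the two facts the paper records later as Lemma \ref{lemma10} (that $\Ap(S,\m(S))$ has $\m(S)$ elements) and Lemma \ref{lemma11} (that $\PF(S)$ is the translate by $-\m(S)$ of $\Maximals\Ap(S,\m(S))$), combined with the observation that $0$ is never maximal once $\m(S)\geq 2$. Both directions of your verification of the correspondence are sound, so the argument is complete.
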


Given a numerical semigroup $S,$ denote by $\SG(S)=\{x\in \PF(S)\mid 2x\in S\}.$
The elements of $\SG(S)$ will be called the {\it special gaps} of $S.$ The following result is Proposition 4.33 from \cite{libro}.

\begin{lemma}\label{lemma8}
	Let $S$ be a numerical semigroup and $x\in \N\backslash S.$ Then $x\in \SG(S)$ if and only if $S\cup \{x\}$ is a numerical semigroup.
\end{lemma}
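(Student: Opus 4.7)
The plan is to prove both directions directly from the definitions of $\PF(S)$ and $\SG(S)$. The statement is really an equivalence between a closure-type condition on the subset $S \cup \{x\}$ and two algebraic conditions on $x$ with respect to $S$ (namely that $x + s \in S$ for every nonzero $s \in S$, and that $2x \in S$). So the argument is little more than unpacking what ``$S \cup \{x\}$ is a numerical semigroup'' means and matching it to the definition of $\SG(S)$.

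For the forward direction, I would assume $x \in \SG(S)$ and verify that $S \cup \{x\}$ is a submonoid of $(\N, +)$ with finite complement. Finiteness of $\N \setminus (S \cup \{x\})$ is immediate from $\N \setminus (S \cup \{x\}) \subseteq \N \setminus S$, and $0 \in S \subseteq S \cup \{x\}$. For closure under addition, take $a, b \in S \cup \{x\}$; if both lie in $S$ there is nothing to check; if exactly one equals $x$, say $a = x$ and $b = s \in S$, then either $s = 0$ and $a + b = x \in S \cup \{x\}$, or $s \neq 0$ and $a + b = x + s \in S$ by $x \in \PF(S)$; finally if $a = b = x$, then $a + b = 2x \in S$ because $x \in \SG(S)$.

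For the converse, assume $S \cup \{x\}$ is a numerical semigroup and $x \in \N \setminus S$. Since $x \neq 0$ (because $0 \in S$), for any $s \in S \setminus \{0\}$ we have $x + s \in S \cup \{x\}$, and equality $x + s = x$ would force $s = 0$, a contradiction; hence $x + s \in S$. This shows $x \in \PF(S)$. The same argument applied to $x + x$ gives $2x \in S \cup \{x\}$ with $2x \neq x$, whence $2x \in S$, so $x \in \SG(S)$.

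There is no real obstacle here: once the membership condition is translated via the two sub-cases (``the added element plus a nonzero element of $S$'' and ``the added element plus itself''), the equivalence is essentially tautological. The only small subtlety worth stating explicitly is the use of $x \neq 0$ to rule out the degenerate equalities $x + s = x$ and $2x = x$ in the backward direction.
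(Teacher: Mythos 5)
Your proof is correct and complete. Note that the paper does not actually prove this lemma; it simply cites it as Proposition 4.33 of \cite{libro}, so there is no in-paper argument to compare against. Your direct verification --- checking that $S\cup\{x\}$ contains $0$, has finite complement, and is closed under addition by splitting into the three cases (both summands in $S$; one summand equal to $x$ and the other a nonzero element of $S$, handled by $x\in\PF(S)$; both summands equal to $x$, handled by $2x\in S$), together with the converse that extracts $x\in\PF(S)$ and $2x\in S$ from closure of $S\cup\{x\}$ --- is exactly the standard argument, and you correctly flag the only subtlety, namely using $x\neq 0$ to rule out the degenerate equalities $x+s=x$ and $2x=x$ in the backward direction.
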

\begin{proposition}\label{proposition9} If $\CC$ is a covariety and $S\in \CC,$ then the set formed by the children of $S$ in the tree $\G(\CC)$ is $\{S\cup \{x\}\mid x\in \SG(S), x<\m(S) \mbox { and }S\cup \{x\}\in \CC\}.$
\end{proposition}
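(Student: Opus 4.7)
The plan is to prove the two set inclusions directly, using the definition of edge in $\G(\CC)$ together with Lemma \ref{lemma8} and Lemma \ref{lemma3}.

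First I would unravel what a child of $S$ means: by the definition of $\G(\CC)$, a child of $S$ is a $\CC$-semigroup $T$ such that $S=T\backslash\{\m(T)\}$. Writing $x=\m(T)$, this forces $T=S\cup\{x\}$ with $x\notin S$. Then I would observe that $T$ being a numerical semigroup means, by Lemma \ref{lemma8}, that $x\in \SG(S)$. To see $x<\m(S)$, I would note that $\m(S)\in S\subseteq T$ is a nonzero element of $T$ while $x=\m(T)$ is the minimum nonzero element of $T$, and $x\neq \m(S)$ because $x\notin S$ but $\m(S)\in S$; hence $x<\m(S)$. Finally $T\in \CC$ is given. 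This shows the containment from left to right.

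For the reverse inclusion, I would take $x\in \SG(S)$ with $x<\m(S)$ and $T=S\cup\{x\}\in \CC$, and check that $T$ is a child of $S$. By Lemma \ref{lemma8}, $T$ is a numerical semigroup. Since $x\notin S$ and $x<\m(S)$, the set $T\backslash\{0\}=\{x\}\cup (S\backslash\{0\})$ has minimum $x$, so $\m(T)=x$ and consequently $T\backslash\{\m(T)\}=T\backslash\{x\}=S$. By the definition of $\G(\CC)$, $(T,S)$ is an edge, so $T$ is a child of $S$.

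There is no real obstacle here: the statement is essentially a direct translation of the edge definition of $\G(\CC)$ combined with the characterization of the one-element enlargements of a numerical semigroup (Lemma \ref{lemma8}) and the elementary fact that the new element is the multiplicity of the enlarged semigroup precisely when it is smaller than $\m(S)$. The only point that requires a line of justification is the strict inequality $x<\m(S)$, which I would handle exactly as indicated above.
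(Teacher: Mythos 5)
Your proof is correct and follows essentially the same route as the paper's: unfold the definition of an edge in $\G(\CC)$, invoke Lemma \ref{lemma8} to identify the adjoined element as a special gap, and observe that $x=\m(S\cup\{x\})$ precisely because $x<\m(S)$. You simply spell out the justification of the inequality $x<\m(S)$, which the paper leaves implicit.
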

\begin{proof}
	If $T$ is a child of $S,$ then $T\in \CC$ and $T\backslash \{\m(T)\}=S.$ Therefore, $S\cup \{\m(T)\}=T\in \CC,$ $\m(T) \in \SG(S)$ and $\m(T)<\m(S).$
	
	If $x<\m(S),$ then $\m(S\cup \{x\})=x.$ Hence, $S\cup \{x\}\in \CC$  and $\left( S\cup \{x\}\right)\backslash \{\m(S\cup \{x\})\}=S.$ Consequently, $S\cup \{x\}$ is a child of $S$ in the tree $\G(\CC).$
\end{proof}

\section{Algorithm to compute $\sA(F)$}

\hspace{0.42cm}Let $S$ be a numerical semigroup and $n\in S\backslash \{0\}$. The
Apéry set of $n$ in $S$ (named so in honour of \cite{apery}) is
$\Ap(S,n)=\{s\in S\mid s-n \notin S\}$. 

The following result is deduced from \cite[Lemma 2.4]{libro}.

\begin{lemma}\label{lemma10}
	If $S$ is a  numerical semigroup and $n\in S\backslash \{0\},$ Then $\Ap(S,n)$ is a set with cardinality $n.$ Moreover, $\Ap(S,n)=\{0=w(0),w(1), \dots, w(n-1)\}$, where $w(i)$ is the least
	element of $S$ congruent with $i$ modulo $n$, for all $i\in
	\{0,\dots, n-1\}.$
\end{lemma}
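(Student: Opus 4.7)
The plan is to identify $\Ap(S,n)$ explicitly as the set of residue-class minima, from which both assertions follow simultaneously.

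First I would verify that, for each $i\in\{0,1,\dots,n-1\}$, the set $S_i=\{s\in S\mid s\equiv i\pmod n\}$ is nonempty, so that $w(i):=\min S_i$ is well defined. This is immediate from the fact that $\N\setminus S$ is finite: each residue class modulo $n$ is an infinite subset of $\N$, hence meets $S$. Note that $w(0)=0$ since $0\in S$ and $0$ is the least nonnegative integer congruent to $0$ modulo $n$.

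Next I would prove the two inclusions
\[
\Ap(S,n)=\{w(0),w(1),\dots,w(n-1)\}.
\]
For the inclusion $\supseteq$, fix $i$. By definition $w(i)\in S$, and $w(i)-n$ is either negative (so not in $S\subseteq\N$) or a nonnegative integer of $S_i$ strictly smaller than $w(i)$, which would contradict minimality; hence $w(i)-n\notin S$, i.e.\ $w(i)\in\Ap(S,n)$. For $\subseteq$, take $s\in\Ap(S,n)$ and let $i$ be its residue class modulo $n$. Then $s\in S_i$, so $s=w(i)+kn$ for some $k\in\N$. If $k\geq 1$, then $s-n=w(i)+(k-1)n$ lies in $S$, because $w(i)\in S$, $n\in S$, and $S$ is closed under addition; this contradicts $s\in\Ap(S,n)$. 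Therefore $k=0$ and $s=w(i)$.

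Finally, since the $w(i)$ are pairwise distinct (they belong to distinct residue classes), the displayed equality immediately yields $\sharp\,\Ap(S,n)=n$, which is the first assertion.

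The argument is essentially routine; the only delicate point is remembering that the closure of $S$ under addition together with $n\in S$ is what forces $s-n\in S$ when $s\neq w(i)$, and that the cofiniteness of $\N\setminus S$ is what guarantees each residue class hits $S$. There is no substantive obstacle.
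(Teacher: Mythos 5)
Your proof is correct and complete. The paper itself gives no proof of this lemma --- it simply cites \cite[Lemma 2.4]{libro} --- and your argument is precisely the standard one found there: each residue class modulo $n$ meets $S$ by cofiniteness, the class minima $w(i)$ lie in $\Ap(S,n)$ by minimality, and conversely any $s\in\Ap(S,n)$ must equal the minimum of its class since otherwise $s-n=w(i)+(k-1)n\in S$ by closure under addition; distinctness of residues then gives cardinality $n$. No gaps.
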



If $S$ is a numerical semigroup, then we define over $\Z$ the following order relation: $a\leq_S b$ if and only if $b-a\in S.$ The following result is \cite[Lemma 10]{JPAA}.
\begin{lemma}\label{lemma11} If $S$ is  a numerical semigroup and $n \in S\setminus
	\{0\}.$ Then
	$$
	\PF(S)=\{w-n\mid w \in \Maximals \Ap(S,n)\}.
	$$
\end{lemma}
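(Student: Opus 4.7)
The plan is to establish the stated equality by verifying both inclusions, unwinding the definitions of $\PF(S)$, $\Ap(S,n)$ and the order $\leq_S$. The two directions turn out to be symmetric in spirit: in each case, I set up the natural candidate element and then use either the maximality hypothesis or the pseudo-Frobenius property to close the argument.

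For the inclusion $\supseteq$, I would take $w \in \Maximals \Ap(S,n)$ and show that $f := w - n$ lies in $\PF(S)$. Since $w \in \Ap(S,n)$, by definition $f = w - n \notin S$, which handles the first requirement. For the second, given any $s \in S \setminus \{0\}$, I need $f + s = w + s - n \in S$. The key observation is that $w + s \in S$ (because $w, s \in S$), so it suffices to rule out the possibility that $w + s \in \Ap(S,n)$. Were that the case, the relation $w \leq_S w + s$ (witnessed by $s \in S$), combined with the maximality of $w$, would force $w + s = w$, hence $s = 0$, contradicting the choice of $s$.

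For the inclusion $\subseteq$, I would begin with $f \in \PF(S)$ and set $w := f + n$. Since $n \in S \setminus \{0\}$, the defining property of $\PF(S)$ gives $w \in S$, and $w - n = f \notin S$, so $w \in \Ap(S,n)$. To see that $w$ is a $\leq_S$-maximal element of $\Ap(S,n)$, I would suppose $w \leq_S w'$ with $w' \in \Ap(S,n)$ and write $w' = w + s$ for some $s \in S$. If $s \neq 0$, then $w' - n = f + s$ belongs to $S$ by the pseudo-Frobenius property, contradicting $w' \in \Ap(S,n)$. Therefore $s = 0$ and $w' = w$, giving maximality. Writing $f = w - n$ exhibits $f$ as an element of the right-hand side.

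I do not foresee a genuine obstacle here: the proof is essentially a direct translation of the three definitions, and the same small trick (converting a $\leq_S$-comparison into a sum $s \in S$ and then invoking the appropriate hypothesis) drives both inclusions. The only point requiring minor care is remembering to isolate the case $s = 0$ before applying the $\PF(S)$ condition, since that condition is asserted only for nonzero elements of $S$.
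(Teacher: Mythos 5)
Your proof is correct. The paper gives no proof of this lemma, simply citing it as Lemma 10 of \cite{JPAA}; your argument is the standard one found there (and in \cite[Proposition 2.20]{libro}), correctly handling the one delicate point that the pseudo-Frobenius condition is only asserted for $s\neq 0$.
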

The following result has an immediate proof.

\begin{lemma}\label{lemma12}
	Let $S$ be a numerical semigroup, $n\in S\backslash \{0\}$ and $w \in \Ap(S,n).$ Then $w\in \Maximals(\Ap(S,n))$ if and only if $w+w'\notin
	\Ap(S,n)$ for all $w'\in \Ap(S,n)\backslash \{0\}. $\\	
\end{lemma}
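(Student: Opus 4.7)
The plan is to prove both implications directly from the definition of $\leq_S$ together with the characterization of $\Ap(S,n)$ provided by Lemma \ref{lemma10}.

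For the forward direction, I would suppose $w\in \Maximals(\Ap(S,n))$ and argue by contradiction: if there were $w'\in \Ap(S,n)\setminus\{0\}$ with $w+w'\in \Ap(S,n)$, then $(w+w')-w=w'\in S\setminus\{0\}$ would give $w<_S w+w'$ inside $\Ap(S,n)$, contradicting the maximality of $w$. This half is essentially immediate.

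The substantive half is the converse. Assume $w+w'\notin \Ap(S,n)$ for every $w'\in \Ap(S,n)\setminus\{0\}$, and suppose for contradiction that there is $w''\in \Ap(S,n)$ with $w<_S w''$, so $s:=w''-w\in S\setminus\{0\}$. The key idea is to use Lemma \ref{lemma10} to decompose $s$ relative to the Ap\'ery set: writing $r=s\bmod n$ and $\tilde w=w(r)$, one has $\tilde w\in \Ap(S,n)$ and $s=\tilde w+kn$ for some $k\in \N$. I would then dispose of the case $\tilde w=0$ first: this would force $s=kn$ with $k\geq 1$, whence $w''-n=w+(k-1)n\in S$, contradicting $w''\in \Ap(S,n)$. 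Hence $\tilde w\in \Ap(S,n)\setminus\{0\}$, and the hypothesis gives $w+\tilde w\notin \Ap(S,n)$. Since $w+\tilde w\in S$, this implies $w+\tilde w-n\in S$, and adding $kn\in S$ yields $w''-n=(w+\tilde w-n)+kn\in S$, again contradicting $w''\in \Ap(S,n)$.

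I expect the main obstacle to be precisely this decomposition step: one has to recognize that an arbitrary witness $w''$ of non-maximality produces, via Lemma \ref{lemma10}, a \emph{nonzero} Ap\'ery element $\tilde w$ to which the hypothesis can be applied, and to check that the residual multiple of $n$ does not spoil the contradiction. Once that decomposition is in place, the rest reduces to the elementary observation that $\Ap(S,n)$ is characterized by the condition $w''-n\notin S$.
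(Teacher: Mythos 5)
Your proof is correct. The paper itself gives no argument for this lemma (it is labelled as having an immediate proof), and your two implications are exactly the natural ones: the forward direction is a one-line consequence of the definition of $\leq_S$, and for the converse your reduction of the witness $w''$ via Lemma \ref{lemma10} --- writing $w''-w=\tilde w+kn$ with $\tilde w\in \Ap(S,n)$, handling $\tilde w=0$ separately, and in either case deriving $w''-n\in S$ to contradict $w''\in \Ap(S,n)$ --- is complete and valid.
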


The following result has a simple proof.

\begin{lemma}\label{lemma13} If $S$ is a numerical semigroup and $S\neq \N,$ then $$\SG(S)=\{x\in \PF(S)\mid 2x \notin \PF(S)\}.$$
\end{lemma}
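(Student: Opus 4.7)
The plan is to prove the equality by double inclusion, using two basic facts about pseudo-Frobenius numbers: first, $\PF(S)\cap S=\emptyset$ by definition, and second, for $S\neq \N$ every pseudo-Frobenius number is strictly positive. The latter holds because $0\in S$ rules out $0\in \PF(S)$, and a negative $z\in \PF(S)$ would force $z+\m(S)\in S$ to be a non-negative integer strictly less than $\m(S)$, contradicting the minimality of the multiplicity (or equaling $0$, which leads to a contradiction using any nonzero element of $\Ap(S,\m(S))$).

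The forward inclusion $\SG(S)\subseteq \{x\in \PF(S)\mid 2x\notin \PF(S)\}$ is immediate from the definitions: if $x\in \SG(S)$, then $x\in \PF(S)$ and $2x\in S$, so since $\PF(S)\cap S=\emptyset$ we have $2x\notin \PF(S)$.

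For the reverse inclusion, I take $x\in \PF(S)$ with $2x\notin \PF(S)$ and must conclude $2x\in S$. I argue by contradiction: assume $2x\notin S$ and verify that $2x$ satisfies the pseudo-Frobenius condition, contradicting the hypothesis. Given any $s\in S\setminus\{0\}$, the fact that $x\in \PF(S)$ gives $x+s\in S$; since $x>0$ and $s>0$, also $x+s>0$, so $x+s\in S\setminus\{0\}$. A second application of the pseudo-Frobenius property of $x$, now to the element $x+s$, yields $2x+s=x+(x+s)\in S$. As this holds for every $s\in S\setminus\{0\}$, we get $2x\in \PF(S)$, the desired contradiction. The only mildly subtle point, and the place where the argument could fail in a more general setting, is the need to have $x+s\neq 0$ so that the pseudo-Frobenius condition of $x$ can be chained twice; this is precisely what the positivity of elements of $\PF(S)$ secures.
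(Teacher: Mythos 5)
Your proof is correct, and the paper itself offers no proof of this lemma (it is dismissed as having ``a simple proof''), so your double-inclusion argument is presumably exactly the intended one. The one point genuinely worth checking --- that elements of $\PF(S)$ are positive when $S\neq\N$, so the pseudo-Frobenius property of $x$ can be applied to $x+s$ --- is handled correctly.
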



\begin{remark}\label{remark14}
	Note that if $S$ is a numerical semigroup and we know $\Ap(S,n)$ for some $n\in S \backslash \{0\},$ as a consequence of Lemmas \ref{lemma11}, \ref{lemma12} and \ref{lemma13}, we can easily compute $\SG(S).$
\end{remark}
We will illustrate the content of the previous remark with an example. 
\begin{example}\label{example15} Let $S=\{0,5,\rightarrow\}.$ Then $\Ap(S,5)=\{0,6,7,8,9\}.$ By applying Lemma \ref{lemma12}, we have that $\Maximals \Ap(S,5)=\{6,7,8,9\}.$ Then Lemma \ref{lemma11} asserts that $\PF(S)=\{1,2,3,4\}.$ Finally, by using Lemma \ref{lemma13}, we obtain $\SG(S)=\{3,4\}.$
	
\end{example}

The following result is easy to prove.

\begin{lemma}\label{16}
	Let $S$ be a numerical semigroup, $n\in S \backslash \{0\}$ and $x\in \SG(S).$ Then $x+n\in \Ap(S,n).$ Furthemore, $\Ap\left( S\cup \{x\},n \right)=\left( \Ap(S,n)\backslash \{x+n\}\right)\cup \{x\}.$
\end{lemma}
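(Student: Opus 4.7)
The plan splits naturally into the two assertions.

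For the first claim, I would argue directly from the defining properties. Since $x \in \SG(S) \subseteq \PF(S)$, we have $x \notin S$ and $x + s \in S$ for every $s \in S\setminus\{0\}$. Taking $s = n$, we get $x + n \in S$. Moreover $(x+n) - n = x \notin S$, so by the definition of the Ap\'ery set, $x+n \in \Ap(S,n)$.

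For the second claim, I would use Lemma~\ref{lemma10}: with $T = S \cup \{x\}$ (which is a numerical semigroup by Lemma~\ref{lemma8}), writing $w_S(i)$ and $w_T(i)$ for the smallest elements of $S$ and $T$ respectively that are congruent to $i$ modulo $n$, both Ap\'ery sets have cardinality $n$ and are parameterized by residue classes. Set $r = x \bmod n$. For every residue $i \neq r$, the $i$-th residue class of $T$ coincides with that of $S$ (since the only element added is $x$, which lies in class $r$), so $w_T(i) = w_S(i)$.

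The one point requiring a small argument is the class $i = r$. Here I want to prove simultaneously that $w_T(r) = x$ and $w_S(r) = x+n$. Suppose $s \in S$ satisfies $s \equiv x \pmod{n}$ and $s < x+n$. Writing $s = x - jn$ with $j \geq 0$ (integer), we cannot have $j = 0$ since $x \notin S$. If $j \geq 1$, then $jn \in S\setminus\{0\}$, so $x = s + jn$ would be the sum of an element of $S$ and a nonzero element of $S$, hence $x \in S$, a contradiction. Therefore every $s \in S$ in class $r$ satisfies $s \geq x+n$, and since we already know $x+n \in S$, it follows that $w_S(r) = x+n$. Meanwhile the same argument shows that no element of $S$ in class $r$ is smaller than $x$, so in $T$ the element $x$ itself is the minimum of its class, giving $w_T(r) = x$.

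Combining the two cases, $\Ap(T,n)$ differs from $\Ap(S,n)$ only in the $r$-th coordinate, where $x+n$ is replaced by $x$, which is precisely the asserted equality. The only nontrivial step is the pseudo-Frobenius argument ruling out smaller representatives of the class $r$; everything else is a direct application of Lemmas~\ref{lemma8} and \ref{lemma10}.
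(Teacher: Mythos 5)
Your proof is correct. The paper itself gives no proof of this lemma (it is stated as ``easy to prove''), and your argument is the natural one it presumably has in mind: the first claim follows from $x\in\PF(S)$ exactly as you say, and the residue-class analysis via Lemma~\ref{lemma10}, with the pseudo-Frobenius argument showing that $x+n$ is the least element of $S$ in the class of $x$ modulo $n$, correctly establishes the second claim.
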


\begin{remark}\label{17}
	Observe that as a consequence from Lemma \ref{16}, if we know $\Ap(S,n),$ then we can easily compute $\Ap(S\cup  \{x\},n).$ In particular, if $\CC$ is a covariety and $S\in \CC,$ then by Lemma \ref{16} allows to calculate the set $\Ap(T,n)$ from $\Ap(S,n),$ for all child $T$ of $S$ in the tree $\G(\CC)$(see Proposition \ref{proposition9}).
\end{remark}
	
	Next we illustrate this remark with an example.
	\begin{example}\label{example18}
		We take again the numerical semigroup $S=\{0,5,\rightarrow\}$ with   $\Ap(S,5)=\{0,6,7,8,9\}.$ By Example \ref{example15} we know that $3\in \SG(S).$ If $T=S\cup \{3\},$ then the Lemma \ref{16}, asserts that $\Ap(T,5)=\left(\{0,6,7,8,9\}\backslash \{3+5\}\right)\cup \{3\}=\{0,3,6,7,9\}.$
	\end{example}

As a consequence of Propositions \ref{proposition4} and \ref{proposition9}, we have the following result. 

\begin{proposition}\label{proposition19}
	Let $F\in \N\backslash \{0\}$ and $S\in \sA(F).$ Then the set formed by the children of $S$ in the tree $\G(\sA(F))$ is $\left\{ S\cup \{x\}\mid x \in \SG(S), \, x<\m(S) \mbox{and } x\neq F\right\}.$	
\end{proposition}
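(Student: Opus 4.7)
The plan is to derive this directly from Proposition \ref{proposition9} by specializing to the covariety $\CC=\sA(F)$, which we are allowed to do since Proposition \ref{proposition4} guarantees that $\sA(F)$ is indeed a covariety. The result of that specialization is that the children of $S$ in $\G(\sA(F))$ are exactly those sets of the form $S\cup\{x\}$ where $x\in\SG(S)$, $x<\m(S)$, and $S\cup\{x\}\in\sA(F)$. So the whole task reduces to showing that, among $x\in\SG(S)$ with $x<\m(S)$, the condition $S\cup\{x\}\in\sA(F)$ is equivalent to $x\neq F$.

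First I would handle the easy direction: if $x=F$, then $F\in S\cup\{x\}$, so $\F(S\cup\{x\})<F$ and hence $S\cup\{x\}\notin\sA(F)$. For the converse, I would assume $x\in\SG(S)$, $x<\m(S)$, and $x\neq F$, and verify that $\F(S\cup\{x\})=F$. Since by Lemma \ref{lemma8} the set $S\cup\{x\}$ is a numerical semigroup, it is enough to show that $F$ is still the largest integer missing from it. Because $\F(S)=F$, we have $F\notin S$, and $x\neq F$ ensures that $F\notin S\cup\{x\}$. Moreover, every integer $z>F$ already lies in $S\subseteq S\cup\{x\}$, so $F=\max(\Z\setminus(S\cup\{x\}))=\F(S\cup\{x\})$, as required.

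The only subtle point is whether the condition $x\neq F$ is automatic from $x<\m(S)$, since then it would be redundant in the statement. If $S\neq\Delta(\sA(F))$, then $\m(S)\leq F$ and thus $x<\m(S)\leq F$ forces $x\neq F$. But when $S=\Delta(\sA(F))=\{0,F+1,\to\}$, we have $\m(S)=F+1$, and then $x<\m(S)$ still allows $x=F$; in that case $F\in\SG(S)$, yet $S\cup\{F\}=\{0,F,F+1,\to\}\notin\sA(F)$. This boundary case is precisely why the extra condition $x\neq F$ is needed, and it is really the only step in the argument that deserves attention; everything else is a routine application of Proposition \ref{proposition9} and the definition of the Frobenius number.
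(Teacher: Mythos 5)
Your proposal is correct and follows exactly the route the paper intends: the paper states Proposition \ref{proposition19} as an immediate consequence of Propositions \ref{proposition4} and \ref{proposition9}, and you simply supply the (short) missing verification that, for $x\in\SG(S)$ with $x<\m(S)$, the condition $S\cup\{x\}\in\sA(F)$ is equivalent to $x\neq F$. Your remark about the boundary case $S=\Delta(\sA(F))$, where $x<\m(S)=F+1$ does not exclude $x=F$, correctly identifies why the hypothesis $x\neq F$ is not redundant.
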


We are now ready to show the  algorithm which gives title to this section. 

The pseudo-code in Algorithm \ref{algorithm20} shows how to compute $\sA(F)$ for any positive integer $F.$

\begin{algorithm}\label{algorithm20}\mbox{}\par
\end{algorithm}
\noindent\textsc{Input}: A positive integer $F.$   \par
\noindent\textsc{Output}: $\sA(F).$

\begin{enumerate}
	\item[(1)] $\sA(F)=\{\Delta(\sA(F))\},$ $B=\{\Delta(\sA(F))\}$ and $\Ap(\Delta(\sA(F)),F+1)=\{0,F+2, \dots, 2F+1\}.$ 
	\item[(2)] For every $S \in B,$ by using Remark \ref{remark14}, compute $$\theta(S)=\{x\in \SG(S)\mid x<\m(S) \mbox{ and }x\neq F\}.$$
	\item[(3)] If $\displaystyle\bigcup_{S\in B}\theta(S)=\emptyset,$ then return $\sA(F).$
	\item[(4)]  $C=\displaystyle\bigcup_{S\in B}\{S\cup \{x\}\mid x\in \theta(S)\}.$ 	
	\item[(5)]  $\sA(F)= \sA(F)\cup C,$ $B=C,$ by using Remark \ref{17}, compute $\Ap(S,F+1)$ for all $S\in C$ and   go to Step $(2).$ 	
\end{enumerate}

In the following example we illustrate the usage of Algorithm \ref{algorithm20}.

\begin{example}
	We are going to compute $\sA(5).$ In order to simplify notation we will 
	write $\Delta$ to denote $\Delta(\sA(5)).$
	\begin{itemize}
		\item $\sA(5)=\{\Delta=\{0,6,\rightarrow\}\},$ $B=\{\Delta\}$ and $\Ap(\Delta,6)=\{0,7,8,9,10,11\}.$
		\item $\theta(\Delta)=\{3,4\}$ and $C=\{\Delta \cup \{3\},\Delta \cup \{4\}\}.$
		\item $\sA(5)=\{\Delta, \Delta \cup \{3\},\Delta \cup \{4\}\}, $ $B=\{\Delta \cup \{3\},\Delta \cup \{4\}\},$ $\Ap(\Delta \cup \{3\},6)=\{0,3,7,8,10,11\}$ and $\Ap(\Delta \cup \{4\},6)=\{0,4,7,8,9,11\}.$
		\item $\theta(\Delta \cup \{3\})=\emptyset,$ $\theta(\Delta \cup \{4\})=\{2,3\}$ and  $C=\{\Delta \cup \{2,4\},\Delta \cup \{3,4\}\}.$
		\item $\sA(5)=\{\Delta, \Delta \cup \{3\},\Delta \cup \{4\}, \Delta \cup \{2,4\}, \Delta \cup \{3,4\} \}, $ $B=\{\Delta \cup \{2,4\},\Delta \cup \{3,4\}\},$ $\Ap(\Delta \cup \{2,4\},6)=\{0,2,4,7,9,11\}$ and $\Ap(\Delta \cup \{3,4\},6)=\{0,3,4,7,8,11\}.$
		\item $\theta(\Delta \cup \{2,4\})=\emptyset$ and $\theta(\Delta \cup \{3,4\})=\emptyset.$ Therefore the Algorithm return $\sA(5)=\{\Delta, \Delta \cup \{3\}, \Delta \cup \{4\},\Delta \cup \{2,4\},\Delta \cup \{3,4\}\}.$
		
		\end{itemize}
\end{example}
\section{The least $\CC$-semigroup that contains a $\CC$-set}
\hspace{0.42cm}Along this section $\CC$ denotes a covariety and $\{M_1,\dots, M_n\}$ is the set formed by the maximal elements of $\CC,$ with respect to set inclusion. Recall that a $\CC$-set is a subset of $M_i\backslash \Delta(\CC)$ for some $i\in \{1,\dots,n\}.$

If $A$ is a $\CC$-set, then we denote by $\CC(A)$ the intersection of all 
 the $\CC$-semigroups containing $A.$ As $\CC$ is finite, then the intersection of $\CC$-semigroups is also a $\CC$-semigroup. Therefore, we can state the following result. 
 
 \begin{proposition}\label{proposition22}
 	If $A$ is a $\CC$-set, then $\CC(A)$ is the least (with respect to set inclusion) $\CC$-semigroup containing $A.$
 	
 \end{proposition}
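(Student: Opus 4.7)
The plan is to verify that the intersection defining $\CC(A)$ is well defined, lands inside $\CC$, and satisfies the required minimality property. The three points to check are (i) the family being intersected is nonempty, (ii) the intersection is a $\CC$-semigroup, and (iii) minimality.

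First I would observe that, because $A$ is a $\CC$-set, there is some maximal element $M_i$ of $\CC$ with $A \subseteq M_i \setminus \Delta(\CC) \subseteq M_i$. Hence the family
\[
\CF_A = \{S \in \CC \mid A \subseteq S\}
\]
contains $M_i$ and is therefore nonempty. By Proposition \ref{proposition2}, $\CC$ is finite, so $\CF_A$ is a finite nonempty family of $\CC$-semigroups. In particular $\CC(A) = \bigcap_{S \in \CF_A} S$ is an intersection of finitely many numerical semigroups.

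Next I would upgrade the two-element intersection axiom of a covariety to finite intersections by a straightforward induction on $\sharp \CF_A$: if $\CF_A = \{S_1,\dots, S_k\}$, then $S_1 \cap S_2 \in \CC$ by axiom 2, and assuming $S_1 \cap \dots \cap S_{k-1} \in \CC$ we get $(S_1 \cap \dots \cap S_{k-1}) \cap S_k \in \CC$, again by axiom 2. Therefore $\CC(A) \in \CC$, and since $A \subseteq S$ for every $S \in \CF_A$, we also have $A \subseteq \CC(A)$. This shows $\CC(A) \in \CF_A$.

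Finally, for minimality, if $T \in \CC$ is any $\CC$-semigroup with $A \subseteq T$, then by definition $T \in \CF_A$, so $\CC(A) = \bigcap_{S \in \CF_A} S \subseteq T$. Hence $\CC(A)$ is the smallest element of $\CF_A$ with respect to set inclusion, which is exactly the claim. There is no serious obstacle here; the only thing worth being careful about is ensuring $\CF_A$ is nonempty, which is precisely why the notion of $\CC$-set was defined as a subset of some $M_i \setminus \Delta(\CC)$.
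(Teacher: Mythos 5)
Your proof is correct and follows essentially the same route as the paper: the paper also defines $\CC(A)$ as the intersection of all $\CC$-semigroups containing $A$ and justifies membership in $\CC$ via the finiteness of $\CC$ (Proposition \ref{proposition2}) together with the binary-intersection axiom. You simply spell out the induction and the nonemptiness of the family (via a maximal element $M_i$) more explicitly than the paper does.
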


\begin{proposition}\label{proposition23} 
	If $S\in \CC,$ then $A=\{x\in \msg(S)\mid x\notin \Delta(\CC)\}$ is a $\CC$-set and $\CC(A)=S.$
\end{proposition}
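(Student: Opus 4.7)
The plan is to verify the two claims separately, using Proposition \ref{proposition22} for the second one.

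First I would show that $A$ is a $\CC$-set. Since $\CC$ is finite (Proposition \ref{proposition2}) and $S \in \CC$, one can choose a maximal element $M_i$ of $\CC$ (with respect to set inclusion) such that $S \subseteq M_i$. As $A \subseteq \msg(S) \subseteq S \subseteq M_i$ and $A \cap \Delta(\CC) = \emptyset$ directly from the definition of $A$, it follows that $A \subseteq M_i \setminus \Delta(\CC)$, so $A$ is a $\CC$-set.

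Next I would establish $\CC(A) = S$. The inclusion $\CC(A) \subseteq S$ is immediate from Proposition \ref{proposition22}, because $S \in \CC$ and $A \subseteq S$, so $S$ is one of the $\CC$-semigroups in the intersection defining $\CC(A)$. For the reverse inclusion, the key observation is that $\Delta(\CC) \subseteq \CC(A)$, since $\Delta(\CC) = \min(\CC)$ is contained in every element of $\CC$. Thus
\[
\msg(S) \;=\; \bigl(\msg(S) \cap \Delta(\CC)\bigr) \,\cup\, A \;\subseteq\; \Delta(\CC) \cup A \;\subseteq\; \CC(A).
\]
Since $\CC(A)$ is a submonoid of $(\N,+)$ containing the generating set $\msg(S)$, we conclude $S = \langle \msg(S) \rangle \subseteq \CC(A)$.

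There is no real obstacle here; the argument is essentially a bookkeeping of the two trivial facts that $\Delta(\CC)$ lies inside every $\CC$-semigroup and that a submonoid closed under addition that contains $\msg(S)$ must contain $S$. The only subtle point is to be sure that one is legitimately invoking Proposition \ref{proposition22}, which requires $A$ to be a $\CC$-set, and that is exactly why the first step needs to be done before appealing to it.
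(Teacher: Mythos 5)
Your proof is correct and follows essentially the same route as the paper: both establish $\CC(A)\subseteq S$ from Proposition \ref{proposition22}, and both obtain the reverse inclusion by observing that any $\CC$-semigroup containing $A$ also contains $\Delta(\CC)$, hence contains $\msg(S)$ and therefore $S$. The only cosmetic difference is that you apply this argument directly to $\CC(A)$ while the paper phrases it for an arbitrary $T\in\CC$ containing $A$, and you spell out the (easy) verification that $A$ is a $\CC$-set, which the paper declares clear.
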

\begin{proof}
	It is clear that $A$ is a $\CC$-set. As $S\in \CC$ and $A\subseteq S,$ then $\CC(A)\subseteq S.$ If $T\in \CC$ and $A\subseteq T,$ then $A\cup \Delta(\CC)\subseteq T.$ Therefore $\msg(S)\subseteq T$ and consequently $S\subseteq T.$ Thus $S\subseteq \CC(A).$ We conclude that $S=\CC(A).$
\end{proof}

As a consequence of Proposition \ref{proposition23}, we have the following result.
\begin{corollary}\label{corollary24} Under the standing notation, $\CC=\{\CC(A)\mid A \mbox{ is a }\CC\mbox{-set}\}.$
	
\end{corollary}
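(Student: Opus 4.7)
The plan is to establish the corollary as a direct double inclusion, leaning entirely on the two preceding results. For the inclusion $\{\CC(A)\mid A\text{ is a }\CC\text{-set}\}\subseteq \CC$, I would simply invoke Proposition \ref{proposition22}: by construction $\CC(A)$ is the least $\CC$-semigroup containing $A$, hence in particular an element of $\CC$. (The tacit observation behind Proposition \ref{proposition22} is that $\CC$ is finite and closed under binary intersection, so an intersection of $\CC$-semigroups, being a finite intersection, still lies in $\CC$; this is already justified in the paragraph preceding Proposition \ref{proposition22}.)

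For the reverse inclusion $\CC\subseteq \{\CC(A)\mid A\text{ is a }\CC\text{-set}\}$, I would take an arbitrary $S\in \CC$ and apply Proposition \ref{proposition23} to the specific set $A=\{x\in \msg(S)\mid x\notin \Delta(\CC)\}$. That proposition delivers both assertions we need in one shot: it tells us that $A$ qualifies as a $\CC$-set (so it belongs to the indexing family of the right-hand side) and that $\CC(A)=S$. Hence $S$ lies in $\{\CC(A)\mid A\text{ is a }\CC\text{-set}\}$, finishing the argument.

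I do not anticipate a genuine obstacle; the corollary is essentially a rephrasing of Proposition \ref{proposition23} together with the defining property of $\CC(A)$. The only point that might warrant a line of exposition is the reminder of why $A$ as chosen above is indeed contained in some $M_i\setminus \Delta(\CC)$ with $M_i$ maximal in $\CC$: since $\CC$ is finite and $S\in \CC$, there is a maximal element $M_i\in \CC$ with $S\subseteq M_i$, and then $\msg(S)\setminus \Delta(\CC)\subseteq S\setminus \Delta(\CC)\subseteq M_i\setminus \Delta(\CC)$. With that remark in place the proof is a two-line verification.
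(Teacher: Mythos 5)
Your proof is correct and follows exactly the route the paper intends: the corollary is stated there as an immediate consequence of Proposition \ref{proposition23} (for the inclusion $\CC\subseteq\{\CC(A)\mid A\text{ is a }\CC\text{-set}\}$) combined with Proposition \ref{proposition22} (for the reverse inclusion). Your extra remark justifying why $A=\{x\in\msg(S)\mid x\notin\Delta(\CC)\}$ is a $\CC$-set is a small but welcome elaboration of a step the paper dismisses as clear.
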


If $A$ is a $\CC$-set and $S=\CC(A)$, then we say that $A$ is a $\CC$-{\it system of generators} of $S.$ Furthemore, if $S\neq
\CC(B)$ for all $B \varsubsetneq A$, then we will say
that $A$ is a $\CC$-{\it minimal system of generators} of $S.$

In general, the $\CC$-minimal system of generators are not unique  as the following example  shows.
\begin{example}\label{example25}
	
	Let $\CC=\{S \mid S \mbox{ is a numerical semigroup, }\m(S)\ge 6 \mbox{ and }\F(S)\leq 11\}\cup \left\{ S_1=\{0,5,6,7,10, \rightarrow\},  S_2=\{0,5,8,9,10, \rightarrow\},  S_3=\{0,5,10, \rightarrow\}\right\}.$
	\begin{itemize}
		\item It is clear that $\Delta(\CC)=\{0,12,\rightarrow\}$ is the minimum of $\CC.$
		\item We can see straightforwardly that the intersection of two elements of $\CC$ is again an element of $\CC.$
		\item It is clear that if $S\in \CC$ and $S\neq \Delta(\CC),$ then $S\backslash \{\m(S)\}\in \CC.$
		\item As a consequence  of the three previous claims, we have that $\CC$ is a covariety.
		\item It is easy to verify that $\CC(\{5,6\})=\CC(\{5,7\})=S_1,$ $\CC(\{5\})=S_3,$ $\CC(\{6\})=\{0,6,12,\rightarrow\}),$ $\CC(\{7\})=\{0,7,12,\rightarrow\})$ and $\CC(\{ \emptyset\})=\{0,12,\rightarrow\}.$
	\end{itemize}
	Therefore, the sets $\{5,6\}$ and $\{5,7\}$ are  $\CC$-minimal system of generators of $S_1$	
\end{example}

Finally, note that  the above example also shows that, in general,  the set $\{x\in \msg(S)\mid x \notin \Delta(\CC)\}$ is not a minimal system of generators of $S.$ In fact, $\{x\in \msg(S_1)\mid x \notin \{0,12, \rightarrow\}\}=\{5,6,7\}$ and $\CC(\{5,7\})=S_1.$

Our next aim in this section will be to prove that every $\sA(F)$-semigroup admits a unique $\sA(F)$-minimal system of generators.

%
%
%
%
\begin{proposition}\label{proposition28}
	If $F\in \N \backslash \{0\}$ and $S \in \sA(F),$ then the set $A=\{x\in \msg(S)\mid x\notin \Delta(\sA(F))\}$ is the unique $\sA(F)$-minimal system of generators of $S.$
	\end{proposition}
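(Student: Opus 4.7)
The plan is to prove the following stronger statement, which immediately yields both minimality and uniqueness: for every $\sA(F)$-system of generators $B$ of $S$, one has $A\subseteq B$. Minimality of $A$ is then automatic, since no proper subset of $A$ can generate $S$; and uniqueness follows because any other $\sA(F)$-minimal system $A'$ must contain $A$ and therefore, by its own minimality, must equal $A$. Note that by Proposition \ref{proposition23} we already know that $A$ itself is an $\sA(F)$-set with $\sA(F)(A)=S$, so there is nothing to check about whether $A$ generates $S$.

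The key intermediate step, and the one requiring the most care, is the explicit computation
\[
\sA(F)(B)=\langle B\cup \Delta(\sA(F))\rangle,
\]
where the right-hand side denotes the usual submonoid of $(\N,+)$ generated by $B\cup\Delta(\sA(F))$. I would first check that this submonoid is a numerical semigroup (which follows because $\Delta(\sA(F))$ contains both $F+1$ and $F+2$, whose gcd is $1$). Next, that its Frobenius number is exactly $F$: it is at most $F$ because $\Delta(\sA(F))$ is contained in the monoid, and it is at least $F$ because the monoid is contained in $S$ (any $\sA(F)$-semigroup containing $B$ contains $B$ and $\Delta(\sA(F))$, and $S$ does so by Proposition \ref{proposition4}), while $F\notin S$. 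So $\langle B\cup \Delta(\sA(F))\rangle$ is an $\sA(F)$-semigroup containing $B$, proving one inclusion, and the reverse inclusion is immediate from the fact that every $\sA(F)$-semigroup containing $B$ contains $B\cup\Delta(\sA(F))$ and hence the submonoid they generate.

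With this formula in hand, the conclusion is formal. If $B$ is any $\sA(F)$-system of generators of $S$, then $S=\langle B\cup \Delta(\sA(F))\rangle$, so $B\cup\Delta(\sA(F))$ is a system of generators of $S$ in the ordinary sense. By the uniqueness of the minimal system of generators of a numerical semigroup (\cite[Corollary 2.8]{libro}), $\msg(S)\subseteq B\cup\Delta(\sA(F))$. Any $x\in A$ lies in $\msg(S)$ and is not in $\Delta(\sA(F))$, so $x\in B$; hence $A\subseteq B$, which completes the proof along the lines sketched in the first paragraph. The only real obstacle is the identification of $\sA(F)(B)$ with the ordinary monoid generated by $B\cup \Delta(\sA(F))$; everything else reduces to this fact and the classical uniqueness of $\msg(S)$.
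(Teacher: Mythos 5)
Your proof is correct, but it follows a genuinely different route from the paper's. Both arguments reduce the proposition to the same key claim, namely that $A\subseteq B$ for every $\sA(F)$-set $B$ with $\sA(F)(B)=S$ (from which minimality and uniqueness are formal, exactly as you say). The paper proves this claim directly: if $x\in A\setminus B$, then $x\in\msg(S)$ and $x<F$, so $S\setminus\{x\}$ is again an $\sA(F)$-semigroup containing $B$, contradicting $\sA(F)(B)=S$; this uses only Lemma \ref{lemma3} and stays entirely inside the covariety machinery of removing generators. You instead first establish the identity $\sA(F)(B)=\langle B\cup\Delta(\sA(F))\rangle$ and then invoke the classical fact that $\msg(S)$ is contained in every ordinary system of generators. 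Your intermediate identity is verified correctly (the gcd, the two bounds on the Frobenius number, and both inclusions all check out), and it is in fact a stronger structural statement about the covariety $\sA(F)$ than anything the paper records at this point: it explains \emph{why} $\sA(F)$ behaves like ordinary generation (and hence why minimal generating sets are unique here, in contrast with Example \ref{example25}), and it essentially anticipates the description $\langle\{a_1,\dots,a_n\}\rangle\cup\{F+1,\rightarrow\}$ used later in Proposition \ref{proposition42}. The trade-off is that the paper's argument is shorter and generalizes more readily to covarieties where no such explicit description of $\CC(B)$ is available, whereas yours buys an explicit formula for $\sA(F)(B)$ as a by-product.
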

\begin{proof}
	By Proposition \ref{proposition23}, we know that  $A$ is an $\sA(F)$-set and $\sA(F)(A)=S.$ 
	To conclude the proof, we will see that if $B$ is an $\sA(F)$-set and $\sA(F)(B)=S,$ then $A \subseteq B.$ In fact, if $x\in A\backslash B,$ then $S\backslash \{x\}$ is an $\sA(F)$-semigroup and $B\subseteq S\backslash \{x\}.$ Therefore, $\sA(F)(B)\subseteq S\backslash \{x\},$ which is impossible.
\end{proof}

We end this section by characterizing the maximal elements of the covariety $\sA(F).$

 A numerical semigroup is {\it irreducible} if 
it can not be expressed as an intersection of two numerical semigroups containing it properly. This concept was introduced in  \cite{irreducibles} where the following result  is also proven.
\begin{proposition}\label{proposition29}Let $S$ be a numerical semigroup. Then $S$ is irreducible if and only if $S$ is a maximal element in $\sA(\F(S)).$	
\end{proposition}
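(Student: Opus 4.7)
The plan is to prove both implications of the biconditional directly, using Lemma \ref{lemma3}(1) for one direction and an explicit decomposition of $S$ for the other. Throughout, set $F=\F(S)$.

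For the ``maximal $\Rightarrow$ irreducible'' direction, I will argue by contradiction: assume $S=S_1\cap S_2$ with $S\subsetneq S_1$ and $S\subsetneq S_2$. By Lemma \ref{lemma3}(1), $F=\F(S)=\F(S_1\cap S_2)=\max\{\F(S_1),\F(S_2)\}$, so after relabelling $\F(S_1)=F$, i.e.\ $S_1\in \sA(F)$. But then $S\subsetneq S_1$ contradicts the maximality of $S$ in $\sA(F)$. This half is essentially immediate from the behaviour of the Frobenius number under intersection.

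For the ``irreducible $\Rightarrow$ maximal'' direction, I will again argue by contradiction: suppose there is $T\in \sA(F)$ with $S\subsetneq T$, and I will exhibit two numerical semigroups $S_1,S_2\supsetneq S$ whose intersection is $S$. The key choice is to take $x:=\max(T\setminus S)$, which exists because $T\setminus S$ is finite and nonempty. Maximality of $x$ together with $T$ being a semigroup forces $x+s\in S$ for every $s\in S\setminus\{0\}$ (otherwise $x+s$ would be a strictly larger element of $T\setminus S$) and $2x\in S$, so $x\in \SG(S)$. Moreover $x\neq F$ because $x\in T$ while $F\notin T$, and $x<F$ because any integer greater than $F$ lies in $S$. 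Hence by Lemma \ref{lemma8}, $S_1:=S\cup\{x\}$ is a numerical semigroup strictly containing $S$.

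For the second semigroup I will simply take $S_2:=S\cup\{F\}$. Observe that $F\in \SG(S)$ (since $F\in \PF(S)$ always, and $2F\in S$), so Lemma \ref{lemma8} gives that $S_2$ is a numerical semigroup properly containing $S$. Since $x\neq F$, we have $S_1\cap S_2 = S\cup(\{x\}\cap\{F\}) = S$, which contradicts irreducibility and finishes the proof. The one delicate point, and in my view the only real obstacle, is choosing $x$ as the \emph{maximum} of $T\setminus S$ rather than an arbitrary element: a generic element of $T\setminus S$ need not be a special gap of $S$, but the maximum automatically satisfies both defining conditions $x+s\in S$ and $2x\in S$.
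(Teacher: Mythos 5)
Your proof is correct, but note that the paper does not actually prove Proposition \ref{proposition29}: it only quotes the result from \cite{irreducibles}, so there is no in-text argument to compare against. Your two directions are both sound. The first is exactly the expected use of Lemma \ref{lemma3}(1): an intersection $S=S_1\cap S_2$ forces one of the $S_i$ to lie in $\sA(\F(S))$ and strictly contain $S$. For the converse, the choice $x=\max(T\setminus S)$ is indeed the crux, and your justification that this $x$ lies in $\SG(S)$ (both $x+s\in S$ for $s\in S\setminus\{0\}$ and $2x\in S$, since otherwise $T\setminus S$ would contain an element larger than $x$; here $x>0$ because $0\in S$) is complete, as is the observation that $\F(S)\in\SG(S)$ always, so $S\cup\{x\}$ and $S\cup\{\F(S)\}$ are two numerical semigroups properly containing $S$ whose intersection is $S$ because $x\neq \F(S)$. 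This is essentially the classical argument from \cite{irreducibles} (see also Chapter 4 of \cite{libro}), so you have reconstructed the intended proof rather than found a genuinely different one. The only cosmetic caveat is the degenerate case $S=\N$ (where $\F(S)=-1$), which the paper implicitly excludes since $\sA(F)$ is only defined for $F\in\N\setminus\{0\}$; your argument handles it vacuously anyway.
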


The irreducible numerical semigroups have great interest because from \cite{barucci} and \cite{froberg}, can be deduced that a numerical semigroup $S$ is irreducible if and only if $S$ is a	symmetric  or  pseudo-symmetric numerical semigroup. These kind of semigroups has been has been widely treated in the literature because a one dimensional local domain analytically irreducible is  Gorenstein (respectively Kunz) if and only if its value semigroup is symmetric (respectively pseudo-symmetric), see \cite{kunz} and \cite{barucci}. Finally, we will mention that in \cite{forum} appears an algorithm which computes all the irreducible numerical semigroups with a fixed Frobenius number. These results are used in \cite{blanco} to give and algorithm that allows to compute all the elements of $\sA(F).$

\section{The covariety generated by a finite family of numerical semigroups}

\hspace{0.42cm}In general, the intersection of covarieties is not a covariey. In fact, if $S$ and $T$ are numerical semigroups with $S\neq T,$ then $\CC_1=\{S\}$ and  $\CC_1=\{T\}$ are covarieties an  $\CC_1\cap  \CC_2=\emptyset$ which is not a covariety. 

The following result has an immediate proof.

\begin{lemma}\label{lemma30} Let $\{\CC_i\}_{i\in I}$ be a family of covarieties such that $\Delta(\CC_i)=\Delta$ for all $i\in I,$ then $\bigcap_{i\in I}\CC_i$ is a covariety and $\Delta$ is its minimum element.	
\end{lemma}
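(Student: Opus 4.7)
The plan is to verify each of the three defining conditions of a covariety for $\bigcap_{i\in I}\CC_i$, using the corresponding conditions for each $\CC_i$ together with the common-minimum hypothesis $\Delta(\CC_i)=\Delta$. First, I would observe that the intersection is nonempty: since $\Delta \in \CC_i$ for every $i \in I$, we have $\Delta \in \bigcap_{i\in I}\CC_i$.

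Next, I would check that $\Delta$ is the minimum of $\bigcap_{i\in I}\CC_i$ with respect to inclusion. Given any $S \in \bigcap_{i\in I}\CC_i$, pick any index $i_0 \in I$; then $S \in \CC_{i_0}$, so $\Delta = \Delta(\CC_{i_0}) \subseteq S$, which establishes condition (1) and simultaneously identifies $\Delta$ as the minimum.

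For condition (2), if $S, T \in \bigcap_{i\in I}\CC_i$, then $S, T \in \CC_i$ for every $i$, and since each $\CC_i$ is a covariety, $S \cap T \in \CC_i$ for every $i$; hence $S \cap T \in \bigcap_{i\in I}\CC_i$. For condition (3), suppose $S \in \bigcap_{i\in I}\CC_i$ with $S \neq \Delta$. Then for each $i$, we have $S \in \CC_i$ and $S \neq \Delta(\CC_i)$, so $S \backslash \{\m(S)\} \in \CC_i$; therefore $S \backslash \{\m(S)\} \in \bigcap_{i\in I}\CC_i$.

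There is no real obstacle here: the argument is a direct verification in which the three covariety axioms for the intersection follow from their counterparts for each $\CC_i$ by pointwise reasoning, and the common-minimum hypothesis $\Delta(\CC_i)=\Delta$ is exactly what is needed to ensure that the intersection is nonempty, has a minimum, and that the clause ``$S\neq \Delta(\CC_i)$'' required to invoke condition (3) in each $\CC_i$ is triggered simultaneously by the single condition $S\neq \Delta$ in the intersection.
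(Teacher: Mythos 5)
Your proof is correct and is exactly the direct verification the paper has in mind when it labels this lemma as having ``an immediate proof'' (no proof is written out in the paper). The key point you identify -- that the common-minimum hypothesis makes the intersection nonempty and lets the clause $S\neq\Delta(\CC_i)$ be triggered uniformly by $S\neq\Delta$ -- is precisely what makes the verification go through.
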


If $F\in \N\backslash \{0\},$ then we denote by $$\sB(F)=\{S\mid S \mbox{ is a numerical semigroup and } \F(S)\leq F\}.$$

The following result is straightforward to prove.

\begin{lemma}\label{lemma31}If $F\in \N\backslash \{0\},$ then $\sB(F)$
	 is a covariety and its minimum is $\Delta(\sB(F))=\{0,F+1,\rightarrow\}.$
\end{lemma}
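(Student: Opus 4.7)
The plan is to verify the three axioms in the definition of a covariety for $\sB(F)$, while simultaneously pinpointing the set $\Delta:=\{0,F+1,\rightarrow\}$ as its minimum.

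First I would check that $\Delta$ lies in $\sB(F)$ and is contained in every member of $\sB(F)$. It is clearly a numerical semigroup whose Frobenius number equals $F$, so $\Delta\in \sB(F)$. For an arbitrary $S\in \sB(F)$, the condition $\F(S)\leq F$ means that every integer greater than $F$ belongs to $S$; together with $0\in S$, this yields $\Delta\subseteq S$. Hence $\Delta=\min(\sB(F))$ with respect to set inclusion.

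Second, for closure under intersection I would invoke Lemma \ref{lemma3}(1): if $S,T\in\sB(F)$, then $S\cap T$ is a numerical semigroup with $\F(S\cap T)=\max\{\F(S),\F(T)\}\leq F$, so $S\cap T\in\sB(F)$.

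Third, I have to show that if $S\in\sB(F)$ and $S\neq\Delta$, then $S\setminus\{\m(S)\}\in\sB(F)$. By Lemma \ref{lemma3}(3), $\m(S)=\min(\msg(S))\in\msg(S)$, and by Lemma \ref{lemma3}(2) this guarantees that $S\setminus\{\m(S)\}$ is a numerical semigroup. Since removing $\m(S)$ from $S$ has the effect $\N\setminus(S\setminus\{\m(S)\})=(\N\setminus S)\cup\{\m(S)\}$, its Frobenius number is $\max\{\F(S),\m(S)\}$. The main (mild) obstacle is to argue that $\m(S)\leq F$: if instead $\m(S)\geq F+1$, then because $\F(S)\leq F$ forces $F+1\in S$, we would get $\m(S)=F+1$, and so $S\supseteq\{0,F+1,\rightarrow\}=\Delta$ while also containing no element of $\{1,\dots,F\}$, giving $S=\Delta$ and contradicting the hypothesis. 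Therefore $\m(S)\leq F$, hence $\F(S\setminus\{\m(S)\})\leq F$, and $S\setminus\{\m(S)\}\in\sB(F)$, completing the verification.
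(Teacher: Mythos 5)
Your proof is correct; the paper omits the argument entirely (it only remarks that the result is ``straightforward to prove''), and your direct verification of the three covariety axioms, using Lemma \ref{lemma3} for closure under intersection and under removal of the multiplicity, is exactly the intended routine argument. The care you take to show $\m(S)\leq F$ when $S\neq\Delta$ (so that $\F(S\setminus\{\m(S)\})=\max\{\F(S),\m(S)\}\leq F$) is the one point that genuinely needs checking, and you handle it correctly.
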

The following result has an immediate proof.
\begin{lemma}\label{lemma32}
	If $S_1,\dots, S_n$ are numerical semigroups and $F=\max\{\F(S_1),\dots,\\
	 \F(S_n)\},$ then $\{S_1,\dots, S_n\}\subseteq \sB(F).$	
\end{lemma}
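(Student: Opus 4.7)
The plan is to simply unfold the definition of $\sB(F)$ given just above the statement. Recall that $\sB(F)=\{S\mid S \text{ is a numerical semigroup and } \F(S)\leq F\}$, so to verify $\{S_1,\dots,S_n\}\subseteq \sB(F)$ I only need to check, for each $i\in\{1,\dots,n\}$, that $S_i$ is a numerical semigroup (given by hypothesis) and that $\F(S_i)\leq F$.

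The second condition is immediate from the definition $F=\max\{\F(S_1),\dots,\F(S_n)\}$: the maximum of a finite family dominates every member of that family, so $\F(S_i)\leq F$ for all $i$. Therefore every $S_i$ satisfies the two defining conditions of $\sB(F)$, which yields the claimed inclusion. There is really no obstacle here; the only reason the lemma is stated is presumably to pin down the notation that will be used in the next section (where $\sB(F)$ plays the role of the ambient covariety in which the covariety generated by a finite family of numerical semigroups will live), and the argument is a one-line verification of definitions.
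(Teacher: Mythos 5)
Your proof is correct and matches the paper's intent exactly: the paper gives no written proof, declaring the result immediate, and your one-line unfolding of the definition of $\sB(F)$ together with the observation that a maximum dominates each member of the family is precisely that immediate argument.
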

If $\{S_1,\dots, S_n\}$ is a finite set of numerical semigroups and  $F=\max\{\F(S_1),\dots,\\
 \F(S_n)\},$ then we denote by $\displaystyle \langle  \{S_1,\dots,S_n\}\rangle,$ the
intersection of all covarieties that contain $\{S_1,\dots, S_n\}$ and have $\{0,F+1, \rightarrow\}$ as minimum. 

As a consequence of Lemmas \ref{lemma30}, \ref{lemma31} and \ref{lemma32}, we have the following result. 

\begin{proposition}\label{proposition33} If $S_1,\dots, S_n$ are numerical semigroups, then $\displaystyle \langle  \{S_1,\dots,S_n\}\rangle$ is the smallest (with respect to set inclusion) covariety that contains $\{S_1,\dots, S_n\}$ and has $\{0,F+1, \rightarrow\}$ as minimum. 	
\end{proposition}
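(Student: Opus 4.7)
The plan is to argue that everything follows formally from Lemmas \ref{lemma30}, \ref{lemma31}, \ref{lemma32}, with the only nontrivial content being to verify that the family of covarieties we are intersecting is nonempty and that all its members share the same minimum, so that Lemma \ref{lemma30} applies cleanly.

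First I would set $F=\max\{\F(S_1),\dots,\F(S_n)\}$ and $\Delta=\{0,F+1,\rightarrow\}$, and consider the family
\[
\mathcal{G}=\{\CC\mid \CC \text{ is a covariety}, \{S_1,\dots,S_n\}\subseteq \CC, \Delta(\CC)=\Delta\}.
\]
By Lemma \ref{lemma32}, $\{S_1,\dots,S_n\}\subseteq \sB(F)$, and by Lemma \ref{lemma31}, $\sB(F)$ is a covariety whose minimum is exactly $\Delta$. Hence $\sB(F)\in \mathcal{G}$, so $\mathcal{G}\neq \emptyset$ and the intersection $\langle \{S_1,\dots,S_n\}\rangle=\bigcap_{\CC\in\mathcal{G}}\CC$ is well defined.

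Next, since all members of $\mathcal{G}$ share the common minimum $\Delta$, Lemma \ref{lemma30} immediately yields that $\langle \{S_1,\dots,S_n\}\rangle$ is itself a covariety with minimum $\Delta$. Moreover, each $S_i$ lies in every $\CC\in\mathcal{G}$, so $\{S_1,\dots,S_n\}\subseteq \langle \{S_1,\dots,S_n\}\rangle$. Thus $\langle \{S_1,\dots,S_n\}\rangle\in\mathcal{G}$.

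Finally, minimality is automatic: if $\CC'$ is any covariety containing $\{S_1,\dots,S_n\}$ and having $\Delta$ as its minimum, then $\CC'\in\mathcal{G}$, hence $\langle \{S_1,\dots,S_n\}\rangle\subseteq \CC'$ by definition of intersection. I do not foresee any real obstacle here; the entire proof is a formal assembly of the three preceding lemmas, and the only point that deserves explicit mention is the use of Lemma \ref{lemma32} to guarantee that $\mathcal{G}$ is nonempty, since otherwise the intersection would be the empty family.
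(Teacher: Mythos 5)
Your proof is correct and follows exactly the route the paper intends: the paper gives no written proof, simply asserting the result as a consequence of Lemmas \ref{lemma30}, \ref{lemma31} and \ref{lemma32}, and your write-up supplies precisely the assembly of those three lemmas, including the one point worth making explicit, namely that $\sB(F)$ witnesses the nonemptiness of the family being intersected.
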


The covariety 
$\displaystyle \langle  \{S_1,\dots,S_n\}\rangle$ is called the {\it covariety generated }by $\{S_1,\dots, S_n\}.$ Our main aim will be to give an algorithmic procedure to
compute all elements of $\displaystyle \langle  \{S_1,\dots,S_n\}\rangle$ from $\{S_1,\dots, S_n\}.$

For all $i \in \{1,\dots, n\},$   we recurrentely define the following sequence:

\begin{itemize}
	\item $S^0_i = S_i$,
	\item $S^{n+1}_i =\left\{\begin{array}{lcl}
		S^n_i\backslash \{\m(S_i^n)\}  & &\mbox{ if }\,\, S_i^n\neq \{0,F+1,\rightarrow\},\\
		\{0,F+1,\rightarrow\}  & &\mbox{ otherwise. }\\
	\end{array}
	\right.$
\end{itemize}

The  following result has an immediate proof.

\begin{lemma}\label{lemma34}For every $i\in \{1,\dots,n\}$ there exists $P_i=\min\{k\in \N\mid S_i^k=\{0,F+1,\rightarrow\}\}.$
	\end{lemma}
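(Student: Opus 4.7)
The plan is to show that along the sequence $\{S_i^n\}_{n\in\N}$, the genus strictly increases until the sequence hits $\Delta=\{0,F+1,\rightarrow\}$, and since the genus is bounded above by $F$, termination must occur.

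First, I would prove by induction on $n$ that (a) $S_i^n$ is a numerical semigroup, and (b) $\Delta\subseteq S_i^n$. The base case $n=0$ holds because $S_i$ is a numerical semigroup with $\F(S_i)\le F$, so every integer $\ge F+1$ lies in $S_i$. For the inductive step, assume $S_i^n$ is a numerical semigroup containing $\Delta$. If $S_i^n=\Delta$, then $S_i^{n+1}=\Delta$ and we are done. Otherwise, there exists an element of $S_i^n$ in $\{1,\dots,F\}$, so $\m(S_i^n)\le F$. By Lemma \ref{lemma3}(3), $\m(S_i^n)\in\msg(S_i^n)$, hence by Lemma \ref{lemma3}(2), $S_i^{n+1}=S_i^n\setminus\{\m(S_i^n)\}$ is a numerical semigroup; moreover, since $\m(S_i^n)\le F$, removing it preserves the inclusion $\Delta\subseteq S_i^{n+1}$.

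Next I would observe that whenever $S_i^n\neq\Delta$, the genus increases by one: $\g(S_i^{n+1})=\g(S_i^n)+1$, since exactly one element (namely $\m(S_i^n)$) is removed from $S_i^n$. Because $S_i^n\supseteq\Delta$ throughout, we have $\g(S_i^n)\le\g(\Delta)=F$ for all $n$. Therefore the strictly increasing integer sequence $\g(S_i^0)<\g(S_i^1)<\cdots$ must reach the value $F$ after at most $F-\g(S_i)$ steps, say at index $k$. At that point $\g(S_i^k)=F=\g(\Delta)$ and $\Delta\subseteq S_i^k$, which forces $S_i^k=\Delta$. Hence the set $\{k\in\N\mid S_i^k=\Delta\}$ is nonempty, and by the well-ordering principle it has a minimum, namely $P_i$.

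There is no real obstacle here; the only point requiring a small argument is verifying that $\m(S_i^n)\le F$ when $S_i^n\neq\Delta$, so that removing it does not destroy the containment $\Delta\subseteq S_i^{n+1}$. Once that is established, the bound on the genus immediately forces termination.
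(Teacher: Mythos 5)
Your argument is correct and is precisely the ``immediate'' argument the paper has in mind (the paper omits the proof, declaring it immediate): each step removes exactly one element while preserving the containment $\{0,F+1,\rightarrow\}\subseteq S_i^n$, so the genus strictly increases up to the bound $\g(\{0,F+1,\rightarrow\})=F$ and the chain must terminate at $\Delta$. The one point genuinely needing care --- that $\m(S_i^n)\le F$ when $S_i^n\neq\{0,F+1,\rightarrow\}$, so the deletion is a minimal generator by Lemma~\ref{lemma3} and does not touch $\{0,F+1,\rightarrow\}$ --- is handled correctly.
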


For all $i\in \{1,\dots,n\},$ we will denote by 	$\Cad(S_i)=\{S_i^0,\dots, S_i^{P_i}\}.$ Note that $\{0,F+1,\rightarrow\}=S_i^{P_i}\subsetneq S_i^{P_i-1}\subsetneq \dots \subsetneq S_i^0=S_i$ and $\sharp (S_i^k\backslash S_i^{k+1})=1$ for every $k \in \{0,\dots, P_i-1\}.$

The following proposition  provides us the previously announced result.

\begin{proposition}\label{proposition35}
	If $S_1,\dots, S_n$ are numerical semigroups, then $\displaystyle \langle  \{S_1,\dots,S_n\}\rangle=\{\bigcap_{b\in B}T_b\mid \emptyset \neq B \subseteq \{1,\dots,n\} \mbox{ and } T_b\in \Cad(S_b) \mbox{ for all }b\in B\}.$
	\end{proposition}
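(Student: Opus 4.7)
The plan is to prove the claimed equality by double inclusion; write $\mathcal{D}$ for the family on the right-hand side. The inclusion $\mathcal{D}\subseteq \langle\{S_1,\dots,S_n\}\rangle$ follows directly from the covariety axioms applied to $\langle\{S_1,\dots,S_n\}\rangle$: an induction on $k$ using axiom (3) in the definition of a covariety gives $\Cad(S_b)\subseteq \langle\{S_1,\dots,S_n\}\rangle$ for every $b$, and axiom (2) then closes finite intersections. For the reverse inclusion, I would verify that $\mathcal{D}$ is itself a covariety whose minimum is $\{0,F+1,\rightarrow\}$ and which contains each $S_i$; Proposition \ref{proposition33} will then force $\langle\{S_1,\dots,S_n\}\rangle\subseteq \mathcal{D}$.

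The memberships $S_i=S_i^0\in \mathcal{D}$ and $\{0,F+1,\rightarrow\}=S_1^{P_1}\in \mathcal{D}$ are immediate, and since every $T_b\in \Cad(S_b)$ contains $\{0,F+1,\rightarrow\}$, the latter is indeed the minimum of $\mathcal{D}$. For closure under intersection, if $U=\bigcap_{b\in B}T_b$ and $V=\bigcap_{c\in C}T'_c$ both lie in $\mathcal{D}$, then for each $d\in B\cap C$ the elements $T_d$ and $T'_d$ sit on the totally ordered chain $\Cad(S_d)$, so one contains the other and $T_d\cap T'_d\in \Cad(S_d)$; this rewrites $U\cap V$ as an admissible intersection indexed by $B\cup C$.

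The main obstacle is closure under $S\mapsto S\setminus\{\m(S)\}$, because if $S=\bigcap_{b\in B}T_b\ne \{0,F+1,\rightarrow\}$ and $m=\m(S)$, the value $m$ need not coincide with $\m(T_b)$ for any $b$. My fix is, for each $b\in B$, to set $k_b=\min\{k\in \N\mid m\notin S_b^k\}$; this minimum exists since $m\in S\subseteq S_b^0$, while $S\ne \{0,F+1,\rightarrow\}$ forces $m\le F$ and hence $m\notin S_b^{P_b}=\{0,F+1,\rightarrow\}$. Writing $T_b=S_b^{j_b}$, the condition $m\in T_b$ gives $j_b\le k_b-1$ and so $S_b^{k_b}\subseteq T_b$; moreover $\m(S_b^{k_b-1})=m$, so that $\m(S_b^0)<\m(S_b^1)<\dots<\m(S_b^{k_b-1})=m$ is a strictly increasing sequence consisting only of elements $\le m$. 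I would then verify $\bigcap_{b\in B}S_b^{k_b}=S\setminus\{m\}$: the inclusion $\subseteq$ is immediate because any such element lies in $\bigcap_b T_b=S$ and avoids $m$; for $\supseteq$, any $x\in S\setminus\{m\}$ is either $0$ or strictly larger than $m=\min(S\setminus\{0\})$, so it differs from every $\m(S_b^i)$ with $i\le k_b-1$ and therefore survives in $S_b^{k_b}=S_b\setminus\{\m(S_b^0),\dots,\m(S_b^{k_b-1})\}$. This places $S\setminus\{m\}$ in $\mathcal{D}$ and completes the verification that $\mathcal{D}$ is a covariety.
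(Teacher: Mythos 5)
Your proposal is correct and follows essentially the same route as the paper: both verify that the right-hand family is a covariety with minimum $\{0,F+1,\rightarrow\}$ containing each $S_i$, using the chain structure of $\Cad(S_b)$ for closure under intersection, and your sets $S_b^{k_b}$ coincide with the paper's $T'_b=\{x\in T_b\mid x>\m(T)\}\cup\{0\}$ in the key step of closure under removing the multiplicity. You merely spell out the index bookkeeping (and the easy inclusion $\mathcal{D}\subseteq\langle\{S_1,\dots,S_n\}\rangle$) more explicitly than the paper does.
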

\begin{proof}
	To prove this proposition it is enough to see that $\CC=\{\bigcap_{b\in B}T_b\mid \emptyset \neq B \subseteq \{1,\dots,n\} \mbox{ and } T_b\in \Cad(S_b) \mbox{ for all }b\in B\}$ is a covariety.
	\begin{itemize}
		\item We can see  that $\{0,F+1,\rightarrow\}$ is the minimum of $\CC.$
		\item It is clear that the intersection  of two elements of $\Cad(S_i)$ is again an element of $\Cad(S_i).$
		\item As a consequence of the previous set, we easily obtain that the intersection of two elements of $\CC$ is again an element of $\CC.$
		\item Now, we will see that if $T\in \CC$ and $T\neq \{0,F+1,\rightarrow\},$ then $T\backslash \{\m(T)\}\in \CC.$
		
		Indeed, if $T\in \CC,$ then there exists $\emptyset \neq B \subseteq \{1,\dots,n\}$ and  there is $ T_b\in \Cad(S_b)$ for all $b\in B$ such that $T=\bigcap_{b\in B}T_b.$ As $\m(T)\in T,$ then $\m(T)\in S_b$ for all $b\in B.$ For every $b\in B,$ we denote by $T'_b=\{x\in T_b\mid x>\m(T)\}\cup \{0\}.$ It is clear that $T'_b\in \Cad(S_b)$ for all $b\in B$ and  $T\backslash \{\m(T)\}=\bigcap_{b\in B}T'_b.$ Therefore, $T\backslash \{\m(T)\}\in \CC.$
		\item As a consequence of the previous points, we have that $\CC$ is a covariety.
	\end{itemize}
\end{proof}
The following result is an immediate consequence of Proposition \ref{proposition35}.
\begin{corollary} \label{corollary36}
	If $S$ is a numerical semigroup, then $\langle \{S\}\rangle=\Cad(S).$
\end{corollary}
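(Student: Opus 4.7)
The plan is to invoke Proposition \ref{proposition35} in the special case $n=1$ with $S_1=S$, and to observe that the indexed intersection appearing in that proposition degenerates into a single term. More precisely, the only nonempty subset $B\subseteq\{1\}$ is $B=\{1\}$, so the family
\[
\Bigl\{\bigcap_{b\in B}T_b\;\Bigm|\;\emptyset\neq B\subseteq\{1\}\text{ and }T_b\in\Cad(S_b)\text{ for all }b\in B\Bigr\}
\]
reduces to $\{T_1\mid T_1\in\Cad(S)\}$, which is exactly $\Cad(S)$.

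First I would explicitly write out what Proposition \ref{proposition35} gives in the case $n=1$, checking that no other choices of $B$ are available. Then I would note that an intersection over a one-element index set is just the unique term in that set, so every element of $\langle\{S\}\rangle$ has the form $T_1$ for some $T_1\in\Cad(S)$, and conversely every $T_1\in\Cad(S)$ arises this way (taking $B=\{1\}$). This gives the double inclusion $\langle\{S\}\rangle=\Cad(S)$.

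There is essentially no obstacle here: the statement is a direct specialization of Proposition \ref{proposition35}. The only thing to be mindful of is to confirm that $F=\F(S)$ in the definition of $\langle\{S\}\rangle$ when $n=1$, so that the minimum $\{0,F+1,\rightarrow\}$ of the ambient covariety coincides with $S^{P_1}\in\Cad(S)$; this guarantees that $\Cad(S)$ is itself a covariety with the correct minimum and that no element of $\Cad(S)$ is missing from $\langle\{S\}\rangle$.
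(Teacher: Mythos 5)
Your proposal is correct and matches the paper's approach exactly: the paper states Corollary \ref{corollary36} as an immediate consequence of Proposition \ref{proposition35}, and your specialization to $n=1$ (noting that the only nonempty $B\subseteq\{1\}$ is $\{1\}$, so the intersections degenerate to the elements of $\Cad(S)$) is precisely that argument. Your remark that $F=\F(S)$ when $n=1$ is a sensible sanity check but not needed beyond what the proposition already guarantees.
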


We end this section by giving an example that illustrates the content of  Proposition \ref{proposition35}.
\begin{example}\label{example37}
Let $S_1=\langle 5,7,9\rangle=\{0,5,7,9,10,12,14,\rightarrow\}$ and $S_2=\langle 4,6,9\rangle=\{0,4,6,8,9,10,12,\rightarrow\}.$ Then $13=\max\{\F(S_1)=13,\F(S_2)=11\},$ $\Cad(S_1)=\displaystyle \left\{S_1, S_1\backslash \{5\}, S_1\backslash \{5,7\}, S_1\backslash \{5,7,9\}, S_1\backslash \{5,7,9,10\}, S_1\backslash \{5,7,9,10,12\}\right\}$ and \\ $\Cad(S_2)=\displaystyle \left\{S_2, S_2\backslash \{4\}, S_2\backslash \{4,6\}, S_2\backslash \{4,6,8\}, S_2\backslash \{4,6,8,9\}, S_2\backslash \{4,6,8,9,10\},\right.$\\ $\left\{ S_2\backslash \{4,6,8,9,10,12\},
 S_2\backslash \{4,6,8,9,10,12,13\} \right\}.$

By applying Proposition \ref{proposition35}, we obtain that
$\displaystyle \langle  \{S_1,S_2\}\rangle=\Cad(S_1)\cup \Cad(S_2)\cup \{T_1\cap T_2 \mid T_1\in \Cad(S_1) \mbox{ and } T_2\in \Cad(S_2)\}.$

\end{example}
\section{The $\CC$-rank of a $\CC$-semigroup}
\hspace{0.42cm}If $\CC$ is a covariey and $S$ is a $\CC$-semigroup, then the $\CC$-{\it rank} of $S$ is $\CC_{\rank}(S)=\min\{\sharp A\mid A \mbox{ is a }\CC\mbox{-set and } \CC(A)=S\}.$

As a consequence of Proposition \ref{proposition23}, we have the following result.
\begin{proposition}\label{proposition38} If $\CC$ is a covariety and $S$ is a $\CC$-semigroup, then $\CC_{\rank}(S)\leq \e(S).$
\end{proposition}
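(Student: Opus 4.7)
The plan is to exhibit an explicit $\CC$-set of cardinality at most $\e(S)$ that generates $S$, which immediately bounds the minimum. The natural candidate is handed to us by Proposition \ref{proposition23}: the set $A=\{x\in \msg(S)\mid x\notin \Delta(\CC)\}$ is a $\CC$-set and satisfies $\CC(A)=S$. So first I would simply invoke Proposition \ref{proposition23} to produce this $A$.

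Next I would observe that $A\subseteq \msg(S)$ by construction, so $\sharp A\leq \sharp\msg(S)=\e(S)$. Combining this with the definition
\[
\CC_{\rank}(S)=\min\{\sharp B\mid B \mbox{ is a }\CC\text{-set and } \CC(B)=S\},
\]
we immediately get $\CC_{\rank}(S)\leq \sharp A\leq \e(S)$, which is the desired conclusion.

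There is essentially no obstacle here: the whole content was packaged in Proposition \ref{proposition23}, which already verified that dropping the minimal generators lying in $\Delta(\CC)$ still yields a $\CC$-set generating $S$. The only subtlety to flag is that in general the inequality can be strict, because some elements of $\msg(S)$ may belong to $\Delta(\CC)$ (and so are excluded from $A$), and moreover, as Example \ref{example25} shows, $A$ itself need not be a $\CC$-minimal system of generators. Thus the proof is just the one-line argument above, and I would present it as such.
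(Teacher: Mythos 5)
Your proof is correct and is exactly the paper's argument: the paper derives Proposition \ref{proposition38} directly from Proposition \ref{proposition23} by taking the $\CC$-set $A=\{x\in \msg(S)\mid x\notin \Delta(\CC)\}$, which has cardinality at most $\e(S)$. Nothing to add.
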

The following result has an immediate proof.
\begin{proposition}\label{proposition39} Let $\CC$ be a covariety and $S\in \CC.$ Then $\CC_{\rank}(S)=0$ if and only if $S=\Delta(\CC).$	
\end{proposition}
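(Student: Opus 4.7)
The plan is to prove both implications directly from the definition of $\CC_{\rank}$ and from Proposition \ref{proposition22}, by observing that the empty set is a legitimate $\CC$-set whose $\CC$-closure is exactly $\Delta(\CC)$.

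For the implication $S=\Delta(\CC)\Rightarrow \CC_{\rank}(S)=0$, I would first argue that the empty set is a $\CC$-set. Since $\CC$ is nonempty, it has at least one maximal element $M_i$, and trivially $\emptyset\subseteq M_i\setminus\Delta(\CC)$, so $A=\emptyset$ satisfies the definition of a $\CC$-set. Next, by Proposition \ref{proposition22}, $\CC(\emptyset)$ is the smallest $\CC$-semigroup containing $\emptyset$; but every $\CC$-semigroup contains $\emptyset$, so $\CC(\emptyset)$ is just the minimum element of $\CC$, which is $\Delta(\CC)$. Hence if $S=\Delta(\CC)$ we have exhibited a $\CC$-set of cardinality $0$ generating $S$, which forces $\CC_{\rank}(S)=0$.

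For the converse $\CC_{\rank}(S)=0\Rightarrow S=\Delta(\CC)$, I would use that by definition of $\CC_{\rank}$ there must exist a $\CC$-set $A$ with $\sharp A=0$ and $\CC(A)=S$. The only set of cardinality zero is $A=\emptyset$, and by the computation above $\CC(\emptyset)=\Delta(\CC)$. Therefore $S=\Delta(\CC)$.

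There is essentially no obstacle here; the only mildly subtle point is verifying that $\emptyset$ is admissible as a $\CC$-set, which follows at once from the definition once one notes that $\CC$ has maximal elements (a consequence of Proposition \ref{proposition2}). Everything else is a direct appeal to Proposition \ref{proposition22} and the fact that $\Delta(\CC)$ is the minimum of $\CC$.
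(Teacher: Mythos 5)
Your proof is correct and is just the natural unwinding of the definitions (the paper itself omits the proof, declaring it immediate). The key observations — that $\emptyset$ is a $\CC$-set and that $\CC(\emptyset)=\Delta(\CC)$ because $\Delta(\CC)$ is the minimum of $\CC$ — are exactly what make both implications work, so nothing further is needed.
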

\begin{lemma}\label{lemma40} Let $\CC$ be a covariety and $S\in \CC$ such that $S\neq \Delta(\CC).$ If $A$ is a $\CC$-set and $\CC(A)=S,$ then $\m(S)\in A.$	
\end{lemma}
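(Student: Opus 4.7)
The plan is to argue by contradiction: assume $\m(S)\notin A$ and use condition (3) in the definition of a covariety to produce a strictly smaller $\CC$-semigroup still containing $A$, contradicting the minimality of $\CC(A)=S$ established in Proposition \ref{proposition22}.

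First I would record the two ingredients. Since $A$ is a $\CC$-set, there is a maximal $M_i$ of $\CC$ with $A\subseteq M_i\setminus\Delta(\CC)$, so $M_i\in\CC$ is a $\CC$-semigroup containing $A$ and hence $\CC(A)$ is well defined with $A\subseteq\CC(A)=S$. In particular $A\subseteq S$.

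Now suppose, toward a contradiction, that $\m(S)\notin A$. Combined with $A\subseteq S$ this gives the inclusion $A\subseteq S\setminus\{\m(S)\}$. Since $S\in\CC$ and $S\neq\Delta(\CC)$, condition (3) in the definition of covariety yields $S\setminus\{\m(S)\}\in\CC$. Thus $S\setminus\{\m(S)\}$ is a $\CC$-semigroup containing $A$, and Proposition \ref{proposition22} forces
\[
\CC(A)\subseteq S\setminus\{\m(S)\}.
\]
Since $\m(S)\in S=\CC(A)$ but $\m(S)\notin S\setminus\{\m(S)\}$, this contradicts $\CC(A)=S$. Therefore $\m(S)\in A$, as required.

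There is no real obstacle; the only point worth checking carefully is that $S\setminus\{\m(S)\}$ actually lies in $\CC$, and this is precisely guaranteed by the hypothesis $S\neq\Delta(\CC)$ together with axiom (3) of a covariety.
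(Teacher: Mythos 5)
Your proof is correct and is essentially identical to the paper's: both argue by contradiction that if $\m(S)\notin A$ then axiom (3) makes $S\setminus\{\m(S)\}$ a $\CC$-semigroup containing $A$, forcing $\CC(A)\subseteq S\setminus\{\m(S)\}$, which contradicts $\CC(A)=S$. Your version just spells out the intermediate steps more explicitly.
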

\begin{proof}
	If $\m(S)\notin A,$ then $S\backslash \{\m(S)\}$ is a $\CC$-semigroup that contains $A.$ Therefore, $\CC(A)\subseteq S\backslash \{\m(S)\},$ which is absurd. 
\end{proof}
\begin{proposition}\label{proposition41} Let $\CC$ be a covariety and $S\in \CC$ such that $S\neq \Delta(\CC).$ Then the following  conditions are equivalent.
	\begin{enumerate}
		\item $\CC_{\rank}(S)=1.$
		\item There is $a\in S$ such that $S=\CC(\{a\}).$
		\item $S=\CC(\{\m(S)\}).$
	\end{enumerate}
\end{proposition}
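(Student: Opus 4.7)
I would prove the three conditions are equivalent by establishing the cyclic chain $(1) \Rightarrow (2) \Rightarrow (3) \Rightarrow (1)$, since each arrow is short and uses one of the preceding results.

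The implication $(1) \Rightarrow (2)$ is essentially unpacking the definition. If $\CC_{\rank}(S) = 1$, then by the very definition of $\CC$-rank there is a $\CC$-set $A$ with $\sharp A = 1$ and $\CC(A) = S$. Writing $A = \{a\}$, the inclusion $A \subseteq \CC(A) = S$ gives $a \in S$, which is (2).

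The implication $(2) \Rightarrow (3)$ is where the key content sits. Given $a \in S$ with $\CC(\{a\}) = S$ and using the hypothesis $S \neq \Delta(\CC)$, Lemma \ref{lemma40} applied to the $\CC$-set $\{a\}$ forces $\m(S) \in \{a\}$, hence $a = \m(S)$. Therefore $S = \CC(\{\m(S)\})$. This is the step I expect to be the main (though mild) obstacle, since without Lemma \ref{lemma40} there would be no reason to identify the generator with the multiplicity; everything else is essentially formal.

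Finally, for $(3) \Rightarrow (1)$: assuming $S = \CC(\{\m(S)\})$, the singleton $\{\m(S)\}$ is a $\CC$-set (note $\m(S) \notin \Delta(\CC)$ is guaranteed, since otherwise $\CC(\{\m(S)\}) = \Delta(\CC) \neq S$) of cardinality one whose $\CC$-closure is $S$, so $\CC_{\rank}(S) \leq 1$. Combined with Proposition \ref{proposition39}, which tells us $\CC_{\rank}(S) = 0$ would force $S = \Delta(\CC)$ (contradicting our hypothesis), we conclude $\CC_{\rank}(S) = 1$. This closes the cycle and completes the proof.
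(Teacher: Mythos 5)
Your proof is correct and follows exactly the same cyclic scheme as the paper: $(1)\Rightarrow(2)$ by definition, $(2)\Rightarrow(3)$ via Lemma \ref{lemma40}, and $(3)\Rightarrow(1)$ via Proposition \ref{proposition39}. You have merely filled in the details that the paper leaves implicit.
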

\begin{proof}
	{\it 1) implies 2).} Trivial.
	
	{\it 2) implies 3).} It is a consequence of Lemma \ref{lemma40}.
	
		{\it 3) implies 1).} It follows from Proposition \ref{proposition39}.
\end{proof}

As a consequence of Proposition \ref{proposition28}, we have the following result.
\begin{proposition}\label{proposition42}
	Let $\{a_1,\dots, a_n,a_{n+1}=F\}\subseteq \N$ where  $0<a_1<\dots<a_n<a_{n+1}$ and $a_{i+1}\notin  \displaystyle \langle  \{a_1,\dots,a_i\}\rangle$ for all $i\in \{1,\dots, n\}.$ Then $\displaystyle \langle  \{a_1,\dots,a_n\}\rangle \cup \{F+1,\rightarrow\}$ is an $\sA(F)$-semigroup with $\sA(F)$-$\rank$  equal to $n.$  Moreover, every  $\sA(F)$-semigroup with $\sA(F)$-$\rank$ equal to $n$ has this form.	
\end{proposition}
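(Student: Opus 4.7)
The plan is to prove both directions using Proposition \ref{proposition28} together with the explicit description
\[
\sA(F)(A)=\langle A\rangle\cup\{F+1,\rightarrow\}
\]
valid for every finite $A\subseteq\{1,\dots,F-1\}$ with $F\notin\langle A\rangle$. This description holds because any $T\in\sA(F)$ containing $A$ must contain $\Delta(\sA(F))=\{0,F+1,\rightarrow\}$ and the submonoid $\langle A\rangle$, while the right-hand side is itself a numerical semigroup (closed under sums because $\{F+1,\rightarrow\}$ absorbs, with finite complement) having Frobenius number $F$.

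For the forward direction, I would first apply the formula above with the $i=n$ hypothesis $F\notin\langle\{a_1,\dots,a_n\}\rangle$ to conclude that $S:=\langle\{a_1,\dots,a_n\}\rangle\cup\{F+1,\rightarrow\}$ belongs to $\sA(F)$ and coincides with $\sA(F)(\{a_1,\dots,a_n\})$. By Proposition \ref{proposition28} it then suffices to check that $\{a_1,\dots,a_n\}$ is already the $\sA(F)$-minimal system of generators of $S$, so that the rank equals exactly $n$. Suppose toward a contradiction that some $a_j$ were removable; since $a_j<F+1$ we would have $a_j\in\langle\{a_1,\dots,a_n\}\setminus\{a_j\}\rangle$. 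In any expression $a_j=\sum_{k\neq j}\lambda_k a_k$ with $\lambda_k\in\N$, no $a_k$ with $k>j$ can actually occur (otherwise $\lambda_k\geq 1$ would force $a_j\geq a_k>a_j$), so $a_j\in\langle\{a_1,\dots,a_{j-1}\}\rangle$, contradicting the hypothesis at index $j-1$.

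For the converse, let $T\in\sA(F)$ with $\sA(F)_{\rank}(T)=n$. Proposition \ref{proposition28} identifies the unique $\sA(F)$-minimal system of generators as $A=\{x\in\msg(T)\mid x\notin\Delta(\sA(F))\}$, which has cardinality $n$. Ordering $A=\{a_1<\dots<a_n\}$ (each strictly less than $F$, since $F\notin T$) and setting $a_{n+1}:=F$, the formula above yields $T=\langle\{a_1,\dots,a_n\}\rangle\cup\{F+1,\rightarrow\}$. For $i<n$, any membership $a_{i+1}\in\langle\{a_1,\dots,a_i\}\rangle$ would express $a_{i+1}$ as a sum of at least two positive elements of $T$ (since $a_{i+1}>a_k$ for all $k\leq i$), contradicting $a_{i+1}\in\msg(T)$; for $i=n$ the condition is immediate from $F\notin T\supseteq\langle A\rangle$. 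The only delicate step is the minimality argument in the forward direction, where the ordering trick (all coefficients nonnegative, all $a_k$ with $k>j$ strictly exceeding $a_j$) is the key observation; the remainder is bookkeeping with Proposition \ref{proposition28} and the formula for $\sA(F)(A)$.
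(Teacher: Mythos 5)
Your proof is correct and follows essentially the route the paper intends: the paper states this proposition without proof, presenting it simply as a consequence of Proposition \ref{proposition28}, and your argument is precisely the natural elaboration of that, via the explicit description $\sA(F)(A)=\langle A\rangle\cup\{F+1,\rightarrow\}$ and the identification of $\{a_1,\dots,a_n\}$ with the unique $\sA(F)$-minimal system of generators. The ordering argument you highlight (no $a_k$ with $k>j$ can appear in a representation of $a_j$, and a representation of $a_{i+1}$ over $\{a_1,\dots,a_i\}$ must use at least two summands) correctly supplies the details the paper leaves implicit.
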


For integers $a$ and $b,$ we say that $a$ {\it divides} $b$ if there exists an integer $c$ such that $b=ca,$ and we denote this by $a\mid b.$ Otherwise, $a$ {\it does not divide} $b$, and we denote this by $a\nmid b.$

As an immediate consequence of Proposition \ref{proposition42}, we have the following result.
\begin{corollary}\label{corollary43} If $\{m,F\}\subseteq \N$ such that $0<m<F$ and $m\nmid F,$ then $\langle m \rangle \cup \{F+1,\rightarrow\}$ is an $\sA(F)$-semigroup with $\sA(F)$-$\rank$ equal to one.  Moreover, every  $\sA(F)$-semigroup with $\sA(F)$-$\rank$ equal to one has this form.	
\end{corollary}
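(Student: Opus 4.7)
The plan is to specialize Proposition \ref{proposition42} to the case $n=1$. Setting $a_1 = m$ and $a_2 = F$ in that statement, the chain hypothesis $0 < a_1 < \dots < a_n < a_{n+1}$ collapses to the condition $0 < m < F$, which is part of the data. The non-containment hypothesis $a_{i+1} \notin \langle \{a_1,\dots,a_i\}\rangle$ for $i \in \{1,\dots,n\}$ reduces to the single requirement $F \notin \langle \{m\}\rangle$. Since $\langle \{m\}\rangle = \{0, m, 2m, 3m, \dots\}$ and $F > 0$, this is equivalent to $m \nmid F$, which is precisely the hypothesis of the corollary.

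With the hypotheses of Proposition \ref{proposition42} verified, it yields at once that $\langle m \rangle \cup \{F+1,\rightarrow\}$ is an $\sA(F)$-semigroup with $\sA(F)_{\rank}$ equal to $1$, which gives the first half of the statement. For the converse, let $S$ be any $\sA(F)$-semigroup with $\sA(F)_{\rank}(S) = 1$. The \textit{moreover} clause of Proposition \ref{proposition42} (again applied with $n=1$) states that every such $S$ has the form $\langle\{a_1\}\rangle \cup \{F+1,\rightarrow\}$ for some integer $a_1$ with $0 < a_1 < F$ and $a_1 \nmid F$. Setting $m := a_1$ gives the desired description of $S$.

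No real obstacle arises here, since the corollary is a direct specialization of Proposition \ref{proposition42} and all the substantive content is already packaged there. The only micro-verification worth spelling out in the write-up is the translation between "$F \notin \langle m\rangle$" and "$m \nmid F$", which is immediate from the explicit description of the submonoid generated by a single positive integer.
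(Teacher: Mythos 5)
Your proposal is correct and matches the paper exactly: the paper presents Corollary \ref{corollary43} as an immediate consequence of Proposition \ref{proposition42}, with no further written proof, and your specialization to $n=1$ together with the observation that $F\notin\langle m\rangle$ is equivalent to $m\nmid F$ is precisely the intended argument.
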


	If $q$ is a rational number,  $\lfloor q \rfloor=\max \{z\in \Z\mid z\leq q\}.$ 
	
\begin{proposition}\label{proposition44}
	If $F\in \N\backslash \{0\},$ $S\in \sA(F)$ and $\sA(F)_{\rank(S)}=1,$ then $\g(S)=F-\displaystyle \left\lfloor \frac{F}{\m(S)}\right\rfloor.$	
\end{proposition}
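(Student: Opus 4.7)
The plan is to combine Corollary~\ref{corollary43}, which gives a complete structural description of $\sA(F)$-semigroups of $\sA(F)$-rank one, with an elementary counting of gaps. The hypothesis $\sA(F)_{\rank}(S)=1$ together with $S\neq \Delta(\sA(F))$ (note $\Delta(\sA(F))$ has rank $0$ by Proposition~\ref{proposition39}) lets us apply Corollary~\ref{corollary43}: setting $m=\m(S)$, we have $0<m<F$, $m\nmid F$, and
\[
S=\langle m\rangle \cup \{F+1,\rightarrow\}=\{0,m,2m,3m,\dots\}\cup\{F+1,F+2,\dots\}.
\]

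From this explicit description, every integer $>F$ lies in $S$, so the gaps of $S$ are contained in $\{1,2,\dots,F\}$. An element $n\in\{1,\dots,F\}$ belongs to $S$ if and only if $n$ is a positive multiple of $m$; in particular $m\nmid F$ guarantees that $F$ itself is a gap, consistently with $\F(S)=F$. The positive multiples of $m$ lying in $\{1,\dots,F\}$ are $m,2m,\dots,km$ with $k=\lfloor F/m\rfloor$, by the defining inequality $km\le F<(k+1)m$, and there are exactly $k$ of them. Subtracting these non-gaps from the $F$ integers in $\{1,\dots,F\}$ yields
\[
\g(S)=\sharp(\N\setminus S)=F-k=F-\left\lfloor \frac{F}{\m(S)}\right\rfloor.
\]

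There is no genuine obstacle in this argument: once Corollary~\ref{corollary43} supplies the closed form for $S$, the result is a one-line count. The only point deserving attention is making sure the multiples $m,2m,\dots,\lfloor F/m\rfloor\cdot m$ are counted without duplication and all lie in $\{1,\dots,F\}$, which is precisely the content of the floor-function inequality $\lfloor F/m\rfloor\cdot m\le F<(\lfloor F/m\rfloor+1)m$.
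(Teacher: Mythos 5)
Your proposal is correct and follows the same route as the paper: invoke Corollary~\ref{corollary43} to write $S=\langle \m(S)\rangle\cup\{F+1,\rightarrow\}$ explicitly and then count the $\left\lfloor F/\m(S)\right\rfloor$ nonzero elements of $S$ below $F$. The paper's proof is just a terser version of the same gap count.
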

\begin{proof}
	From Corollary \ref{corollary43}, we deduce that $S=\langle \m(S) \rangle \cup \{F+1,\rightarrow\}.$ Therefore, $S=\left\{0,\m(S),2\m(S),\dots,\left\lfloor \frac{F}{\m(S)}\right\rfloor \m(S) \right\} \cup \{F+1,\rightarrow\}.$ Hence, $\g(S)=F-\displaystyle \left\lfloor \frac{F}{\m(S)}\right\rfloor.$
\end{proof}
	\begin{example}\label{example45} Let $\sA(15)(\{6\}).$ By applying Proposition \ref{proposition44}, we have that $\g(S)=15-\displaystyle \left\lfloor \frac{15}{6}\right\rfloor=15-2=13.$
		\end{example} 
	
	Let $P_1,\dots, P_r$ be positive prime intergers and $\{\alpha_1,\dots, \alpha_r\}\subseteq \N.$ We know that the number of positive divisors of $P_1^{\alpha_1}\dots P_r^{\alpha_r}$ is $(\alpha_1+1)\cdots (\alpha_r+1).$  Then, as a consequence of Corollary \ref{corollary43}, we have the following result. 
	
	\begin{proposition}\label{proposition46} Let $F$ be an integer such that $F\ge 2.$ If $F=P_1^{\alpha_1}\dots P_r^{\alpha_r}$ is the decomposition of $F$ into primes, then the set $\{S\in \sA(F)\mid  \sA(F)_{\rank(S)}=1\}$ has cardinality $F-(\alpha_1+1)\cdots (\alpha_r+1).$	
	\end{proposition}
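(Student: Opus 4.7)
The plan is to use Corollary \ref{corollary43} to reduce the problem to a counting question about the integers $m$ in the range $0 < m < F$ with $m \nmid F$, and then count divisors.

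First I would observe that Corollary \ref{corollary43} gives a complete parametrization of the set $X = \{S\in \sA(F)\mid  \sA(F)_{\rank(S)}=1\}$: each such $S$ equals $\langle m \rangle \cup \{F+1,\rightarrow\}$ for some integer $m$ satisfying $0 < m < F$ and $m \nmid F$. Next I would note that this parametrization is injective, because the parameter $m$ is recovered from $S$ as $\m(S)$ (indeed $\m(\langle m\rangle \cup \{F+1,\rightarrow\}) = m$ since $m < F$ implies $m$ is already in $\langle m\rangle$ and is the smallest positive element). Hence $\sharp X$ equals the cardinality of the set
\[
D = \{m \in \N \mid 0 < m < F,\ m \nmid F\}.
\]

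Then I would count $D$ by complementation inside $\{1,2,\dots,F-1\}$. The total cardinality is $F-1$, so it suffices to subtract the number of positive divisors of $F$ that are strictly less than $F$. The standard formula for the number of positive divisors of $F = P_1^{\alpha_1}\cdots P_r^{\alpha_r}$ gives $(\alpha_1+1)\cdots(\alpha_r+1)$; removing the divisor $F$ itself leaves $(\alpha_1+1)\cdots(\alpha_r+1)-1$ proper divisors. Therefore
\[
\sharp D = (F-1) - \bigl((\alpha_1+1)\cdots(\alpha_r+1)-1\bigr) = F - (\alpha_1+1)\cdots(\alpha_r+1),
\]
which yields the claim.

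There is essentially no obstacle here: once Corollary \ref{corollary43} is invoked, the argument reduces to an elementary divisor count and the bijection $S \mapsto \m(S)$ between $X$ and $D$. The only minor point worth being explicit about is that the multiplicity of $\langle m\rangle \cup \{F+1,\rightarrow\}$ is indeed $m$, which needs $m < F$, but this is part of the hypotheses of Corollary \ref{corollary43}.
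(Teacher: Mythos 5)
Your proposal is correct and follows exactly the route the paper intends: the paper simply recalls the divisor-count formula $(\alpha_1+1)\cdots(\alpha_r+1)$ and declares the result a consequence of Corollary \ref{corollary43}, which is precisely the parametrization-plus-complementary-count you carry out (and you supply the injectivity detail $m=\m(S)$ that the paper leaves implicit).
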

\begin{example}\label{example47}
	As $72=2^3\cdot3^2$, then by applying Proposition \ref{proposition46}, the set  $\{S\in \sA(72)\mid  \sA(72)_{\rank(S)}=1\}$ has cardinality $72-(3+1)\cdot(2+1)=72-12=60.$	
\end{example}

The next aim  is to study  the $\sA(F)$-semigroups with maximum rank. For this, we need to introduce some concepts and results.

The following result is deduced from \cite[Proposition 2.10]{libro}.

\begin{lemma}\label{lemma48} Let $S$ be a numerical semigroup. Then $\e(S)\leq \m(S).$
\end{lemma}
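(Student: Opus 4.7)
The plan is to prove the bound via the Apéry set of $\m(S)$ in $S$. By Lemma \ref{lemma10}, the set $\Ap(S,\m(S))$ has cardinality exactly $\m(S)$ and contains $0$, so $\Ap(S,\m(S))\setminus\{0\}$ has $\m(S)-1$ elements. The strategy is therefore to show that $\msg(S)\setminus\{\m(S)\}$ is contained in $\Ap(S,\m(S))\setminus\{0\}$, from which $\e(S)=\sharp\msg(S)\leq 1+(\m(S)-1)=\m(S)$ follows immediately.

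First, I would fix $x\in\msg(S)$ with $x\neq\m(S)$ and argue that $x-\m(S)\notin S$, so that $x\in\Ap(S,\m(S))$ by definition. Indeed, since $\m(S)=\min(S\setminus\{0\})$ and $x\in S\setminus\{0\}$ with $x\neq\m(S)$, one has $x>\m(S)$, so $x-\m(S)$ is a positive integer. If $x-\m(S)\in S$, then $x-\m(S)\in S\setminus\{0\}$ and the decomposition $x=\m(S)+(x-\m(S))$ would express $x$ as a sum of two nonzero elements of $S$, contradicting the fact that $x$ is a minimal generator (every minimal generator, by the characterization of $\msg(S)$ in \cite[Corollary 2.8]{libro}, fails to be such a sum). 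Hence $x-\m(S)\notin S$, and by definition of the Apéry set, $x\in\Ap(S,\m(S))$.

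Second, since $0\notin\msg(S)$, this inclusion refines to $\msg(S)\setminus\{\m(S)\}\subseteq\Ap(S,\m(S))\setminus\{0\}$. Taking cardinalities and using Lemma \ref{lemma10} gives $\e(S)-1\leq\m(S)-1$, i.e.\ $\e(S)\leq\m(S)$, as desired.

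There is no real obstacle here: the entire argument rests on the elementary observation that subtracting $\m(S)$ from any minimal generator must leave the semigroup, which is an immediate consequence of the definition of minimality. The only point to be careful about is handling $x=\m(S)$ separately (it contributes the ``$+1$'' in the final count) and noting that $\m(S)$ itself is always a minimal generator by Lemma \ref{lemma3}(3), so the bound is in fact attained on the generator side by $\m(S)$ being included.
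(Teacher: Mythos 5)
Your proof is correct: the containment $\msg(S)\setminus\{\m(S)\}\subseteq\Ap(S,\m(S))\setminus\{0\}$, justified by the fact that a minimal generator cannot be a sum of two nonzero elements of $S$, immediately yields $\e(S)\leq 1+(\m(S)-1)=\m(S)$. The paper gives no proof of its own (it only cites \cite[Proposition 2.10]{libro}), and your Ap\'ery-set argument is exactly the standard one behind that reference.
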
	

A numerical semigroup $S$ is  said
to be   {\it maximal
embedding dimension}(referred to henceforth as $\MED$-semigroup) if $\e(S) = \m(S).$

In the literature one can find a long list of works dealing with the study
of one dimensional analytically irreducible local domains via their value
semigroups. One of the properties
studied for this kind of rings using this approach is that of being of maximal
embedding dimension (see [ \cite{abhyankar}, \cite{barucci}, \cite{brown-herzog} and \cite{sally}]).

The following result is deduced from \cite[Corollary 3.2]{libro}.

\begin{lemma}\label{lemma49}If $S$ is a $\MED$-semigroup, then $\F(S)=\max\left(\msg(S) \right)-\m(S).$
\end{lemma}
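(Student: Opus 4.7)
The plan is to deduce the formula from the well-known identity $\F(S)=\max\Ap(S,n)-n$ (valid for any $n\in S\setminus\{0\}$) applied at $n=\m(S)$, once we identify the Apéry set with $\{0\}\cup(\msg(S)\setminus\{\m(S)\})$.

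First I would recall (or quickly prove) the identity $\F(S)=\max\Ap(S,\m(S))-\m(S)$. One direction is immediate since $\max\Ap(S,\m(S))-\m(S)\notin S$ by definition of the Apéry set, so this value is $\le\F(S)$. For the reverse, pick any $z\in\Z\setminus S$ and look at the unique $w(i)\in\Ap(S,\m(S))$ with $w(i)\equiv z\pmod{\m(S)}$ (using Lemma \ref{lemma10}); since $w(i)\in S$ while $z\notin S$ and they are congruent modulo $\m(S)$, we must have $z<w(i)$, hence $z\le w(i)-\m(S)\le\max\Ap(S,\m(S))-\m(S)$.

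Next I would show the set-theoretic equality
\[
\Ap(S,\m(S))=\{0\}\cup\bigl(\msg(S)\setminus\{\m(S)\}\bigr).
\]
The inclusion $\supseteq$ is the general (non-MED) fact that every minimal generator other than $\m(S)$ lies in $\Ap(S,\m(S))$: if $x\in\msg(S)\setminus\{\m(S)\}$ and $x-\m(S)\in S$, then $x=(x-\m(S))+\m(S)$ would express $x$ as a sum of two nonzero elements of $S$, contradicting $x\in\msg(S)$. For $\subseteq$ I count cardinalities: the right-hand side has $1+(\e(S)-1)=\e(S)$ elements, while $|\Ap(S,\m(S))|=\m(S)$ by Lemma \ref{lemma10}. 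Under the MED hypothesis $\e(S)=\m(S)$, both sides have the same cardinality, so the inclusion $\supseteq$ forces equality.

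Combining the two steps gives $\max\Ap(S,\m(S))=\max(\msg(S)\setminus\{\m(S)\})=\max(\msg(S))$ (the maximum over a set of positive integers is unaffected by adjoining $0$, and $\max\msg(S)>\m(S)$ since $S\ne\N$ forces $\e(S)\ge 2$ whenever $\m(S)\ge 2$; the degenerate case $S=\N$ is excluded because $\F(\N)$ is undefined in the convention used). Therefore $\F(S)=\max(\msg(S))-\m(S)$, as claimed. The only mildly delicate point is the cardinality argument in the middle step, which is where the MED hypothesis is used essentially; everything else is formal manipulation with Apéry sets.
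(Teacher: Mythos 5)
Your proof is correct. The paper itself gives no argument for this lemma --- it simply cites \cite[Corollary 3.2]{libro} --- and your derivation (Selmer's formula $\F(S)=\max\Ap(S,\m(S))-\m(S)$ combined with the identification $\Ap(S,\m(S))=\{0\}\cup(\msg(S)\setminus\{\m(S)\})$, forced by the cardinality count $\e(S)=\m(S)$) is exactly the standard route by which that corollary is obtained in the cited reference. The only quibble is your parenthetical about $S=\N$: under the paper's convention $\F(\N)=-1$ is defined, and the formula genuinely fails there, so the honest remark is that the lemma tacitly assumes $S\neq\N$ (as does Lemma \ref{lemma7}); this does not affect the substance of your argument.
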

\begin{proposition}\label{proposition50} Let $F\in \N\backslash \{0\}$ and $S\in \sA(F).$ Then $\sA(F)_{\rank}(S)\leq \m(S)-1.$
\end{proposition}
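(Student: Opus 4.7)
The plan is to leverage Proposition \ref{proposition28} to reduce the statement to a bound on the size of the unique minimal generating set, and then split into cases according to the embedding dimension. First I would note that by Proposition \ref{proposition28} the set
\[A=\{x\in \msg(S)\mid x\notin \Delta(\sA(F))\}=\{x\in \msg(S)\mid x\leq F\}\]
is the unique $\sA(F)$-minimal system of generators of $S$. Any $\sA(F)$-generating set $B$ of $S$ can be shrunk to a minimal one by discarding redundant elements, and uniqueness forces the outcome to coincide with $A$; hence $A\subseteq B$ for every $\sA(F)$-generating set $B$, and consequently $\sA(F)_{\rank}(S)=\sharp A$. It therefore suffices to show $\sharp A\leq \m(S)-1$.

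Next I would split on the embedding dimension, which by Lemma \ref{lemma48} satisfies $\e(S)\leq \m(S)$. If this inequality is strict, then
\[\sharp A\leq \sharp \msg(S)=\e(S)\leq \m(S)-1,\]
and we are done. If instead $\e(S)=\m(S)$, then $S$ is a MED-semigroup, so Lemma \ref{lemma49} yields $\max(\msg(S))=F+\m(S)$. Since $\m(S)\geq 1$, this top minimal generator is strictly larger than $F$ and therefore does not belong to $A$; we conclude
\[\sharp A\leq \sharp \msg(S)-1=\m(S)-1,\]
as required.

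The only step that needs care is the identification $\sA(F)_{\rank}(S)=\sharp A$ drawn from Proposition \ref{proposition28}; once this reduction is in place the remainder is a short dichotomy. The MED case is the extremal one, and the saving of one unit with respect to the naive bound $\sharp \msg(S)\leq \m(S)$ comes precisely from the fact that in a MED-semigroup the largest minimal generator is forced above $F$ by Lemma \ref{lemma49}, and hence automatically falls outside the $\sA(F)$-minimal generating set $A$.
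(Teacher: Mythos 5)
Your proof is correct and follows essentially the same route as the paper's: bound the rank by the size of the $\sA(F)$-minimal system of generators from Proposition \ref{proposition28}, invoke Lemma \ref{lemma48} for $\e(S)\leq \m(S)$, and in the extremal MED case use Lemma \ref{lemma49} to push $\max(\msg(S))=F+\m(S)$ above $F$ and out of $A$. The only (harmless) extra you add is the observation that the rank actually equals $\sharp A$, where the paper only needs the inequality.
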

\begin{proof}
	By Proposition \ref{proposition38}, we know that $\sA(F)_{\rank(S)}\leq \e(S)$ and by Lemma \ref{lemma48}, we have  $\e(S) \leq \m(S).$ If $\e(S) =\m(S),$ then by applying Lemma \ref{lemma49}, we deduce that $F<\max(\msg(S)).$ By using Proposition \ref{proposition28}, we conclude that $\sA(F)_{\rank}(S)\leq \e(S)-1= \m(S)-1.$
\end{proof}
If $S\in \sA(F)$ and  $\sA(F)_{\rank}(S)=\m(S)-1,$ then we say that $S$ is an  $\sA(F)${\it-semigroup with maximum }  $\sA(F)_{\rank}$ (referred to henceforth as $\M\sA(F)\rR$-semigroup).
\begin{proposition}\label{proposition51} Let $S$ be a $\MED$-semigroup and $F=\max(\msg(S)).$ Then $S\backslash \{F\}$ is a  $\M\sA(F)\rR$-semigroup. Moreover, every $\M\sA(F)\rR$-semigroup has this form.
\end{proposition}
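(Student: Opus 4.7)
The plan is to prove both implications using Proposition 28, which tells us that $\sA(F)_{\rank}(T)$ equals the number of minimal generators of $T$ lying in $\{1,\dots,F\}$ (equivalently, not in $\Delta(\sA(F))=\{0,F+1,\rightarrow\}$).

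For the forward direction, let $S$ be a MED-semigroup with $m=\m(S)$ and $F=\max(\msg(S))$; set $T=S\setminus\{F\}$. Since $F\in\msg(S)$, Lemma \ref{lemma3} tells us $T$ is a numerical semigroup, and clearly $\m(T)=m$. By Lemma \ref{lemma49}, $\F(S)=F-m<F$, and since removing $F$ from $S$ only adds $F$ to the set of gaps (no other integer becomes a gap, because $T$ is a semigroup), we get $\F(T)=F$, i.e.\ $T\in\sA(F)$. Now $\msg(S)$ has exactly $m$ elements by the MED hypothesis, so $\msg(S)\setminus\{F\}$ is a set of $m-1$ elements, all bounded above by $\F(S)=F-m<F$. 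These elements remain minimal in $T$, since $T\subseteq S$ and being non-decomposable in $S$ implies non-decomposable in $T$. Hence $\msg(T)\cap\{1,\dots,F\}$ contains at least $m-1$ elements, giving $\sA(F)_{\rank}(T)\geq m-1$ by Proposition \ref{proposition28}; Proposition \ref{proposition50} supplies the reverse inequality. Therefore $T$ is a $\M\sA(F)\rR$-semigroup.

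For the converse, suppose $T\in\sA(F)$ with $\sA(F)_{\rank}(T)=m-1$ where $m=\m(T)$, and define $S=T\cup\{F\}$. Since $F=\F(T)\in\PF(T)$ and $2F>F$ implies $2F\in T$, Lemma \ref{lemma13} gives $F\in\SG(T)$, so by Lemma \ref{lemma8}, $S$ is a numerical semigroup; note $\m(S)=m$. The key step is to show $S$ is MED. First, $F\in\msg(S)$: any decomposition $F=a+b$ with $a,b\in S\setminus\{0\}$ cannot involve $F$ itself (else the other summand vanishes), so $a,b\in T$, forcing $F=a+b\in T$, a contradiction. Second, each of the $m-1$ elements $g\in\msg(T)\cap\{1,\dots,F\}$ remains in $\msg(S)$: if $g=a+b$ with $a,b\in S\setminus\{0\}$, then either both $a,b\in T$, contradicting $g\in\msg(T)$, or one of them equals $F$, forcing the other to be $g-F\leq 0$, impossible. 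Combining, $\msg(S)$ contains at least $m$ distinct elements, and by Lemma \ref{lemma48}, $\e(S)\leq m$; therefore $\e(S)=m$, so $S$ is MED. Moreover $\msg(S)$ coincides exactly with $\{F\}\cup(\msg(T)\cap\{1,\dots,F\})$, which shows $\max(\msg(S))=F$, and finally $T=S\setminus\{F\}$ by construction.

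The main obstacle is the converse direction, specifically the verification that adding $F$ back to $T$ produces a MED-semigroup: this requires a careful two-way comparison between $\msg(T)$ and $\msg(S)$, using the hypothesis on the rank to guarantee that enough minimal generators of $T$ lie below $F$ and survive as minimal generators of $S$, while simultaneously arguing that the newly inserted $F$ is itself indecomposable in $S$. Everything else (the Frobenius calculations, closure under intersection, and the trivial edge cases) follows from the earlier lemmas without further work.
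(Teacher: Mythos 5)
Your proof is correct and follows essentially the same route as the paper's: both directions rest on Proposition \ref{proposition28} (uniqueness of the $\sA(F)$-minimal system of generators) combined with Lemma \ref{lemma49} and the bound from Proposition \ref{proposition50}, your write-up merely supplying the generator-counting details that the paper's very terse proof leaves implicit. One minor slip worth noting: the elements of $\msg(S)\setminus\{F\}$ need not be bounded above by $\F(S)=F-\m(S)$ (for $S=\langle 4,9,10,11\rangle$ the generators $9,10$ exceed $\F(S)=7$); what you actually need, and what is immediate, is that they are strictly less than $F=\max(\msg(S))$ and hence lie outside $\Delta(\sA(F))$.
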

\begin{proof}
	By Lemma \ref{lemma49}, we know that $S\backslash \{F\}\in \sA(F)$ and $\{x\in \msg(S\backslash \{F\})\mid x \notin \Delta(\sA(F))\}=\msg(S)\backslash \{F\}.$ By  Proposition \ref{proposition28},  we have $\sA(F)_{\rank}(S\backslash \{F\})=\m(S\backslash \{F\})-1.$
	
	If $S\in \sA(F)$ and $\sA(F)_{\rank}(S)=\m(S)-1,$ then by applying Proposition \ref{proposition28} we deduce that $S\cup \{F\}$ is a $\MED$-semigroup and $F=\max\left(\msg(S\cup \{F\}) \right).$
\end{proof}

If $A$ and $B$ are nonempty subsets of $\Z$, we write $A+B=\{a+b \mid a\in A, b\in B\}.$ 

The following result is deduced from \cite[Proposition I.2.9]{barucci}.

\begin{lemma}\label{lemma52}
	Let $S$ be a numerical semigroup. Then $S$ is a $\MED$-semigroup if and only if $(S\backslash \{0\})+\{-\m(S)\}$ is a numerical semigroup.	
\end{lemma}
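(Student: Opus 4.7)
The plan is to translate both sides of the equivalence into the same condition on the Ap\'ery set $\Ap(S,m)$, where $m = \m(S)$. Set $T = (S\backslash\{0\}) + \{-m\} = \{s - m \mid s \in S \backslash \{0\}\}$. Since every nonzero element of $S$ is at least $m$, we have $T \subseteq \N$; moreover $0 \in T$ (taking $s = m$), and $\N \backslash T$ is a shift of $\N \backslash (S \backslash \{0\})$, hence finite. Therefore $T$ is a numerical semigroup if and only if it is closed under addition, i.e.\ if and only if $s_1 + s_2 - m \in S$ for all $s_1, s_2 \in S \backslash \{0\}$.

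Next I reduce this closure condition to one on $\Ap(S,m)\backslash\{0\}$. By Lemma \ref{lemma10} every nonzero $s_i \in S$ admits a unique decomposition $s_i = w_i + k_i m$ with $w_i \in \Ap(S,m)$ and $k_i \in \N$. A short case analysis handles the easy cases: if $w_i = 0$ for some $i$ (so $k_i \geq 1$), then $s_1 + s_2 - m$ equals $s_2 + (k_1-1)m$ or $s_1 + (k_2-1)m$, which lies in $S$; and if $w_1, w_2 \ne 0$ with $k_1 + k_2 \geq 1$, then $s_1 + s_2 - m = w_1 + w_2 + (k_1 + k_2 - 1)m \in S$ since $w_1 + w_2 \in S$. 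So the only nontrivial case is $w_1, w_2 \in \Ap(S,m)\backslash\{0\}$ with $k_1 = k_2 = 0$, giving the requirement $w_1 + w_2 - m \in S$. Because $w_1 + w_2 \in S$ automatically and $\Ap(S,m) = \{s \in S \mid s - m \notin S\}$, this is equivalent to $w_1 + w_2 \notin \Ap(S,m)$. Thus $T$ is a numerical semigroup if and only if no sum of two elements of $\Ap(S,m)\backslash\{0\}$ belongs to $\Ap(S,m)$.

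Finally I will show the $\MED$ condition is equivalent to the same Ap\'ery-set condition. The Ap\'ery decomposition yields $\msg(S)\backslash\{m\} \subseteq \Ap(S,m)\backslash\{0\}$, and since $\sharp\Ap(S,m) = m$, $S$ is $\MED$ if and only if this inclusion is an equality. For $w \in \Ap(S,m)\backslash\{0\}$, the condition $w \notin \msg(S)$ means $w = a + b$ with $a, b \in S\backslash\{0\}$; decomposing $a = w_a + k_a m$ and $b = w_b + k_b m$ and using that $w$ is the minimum of its residue class modulo $m$ forces $k_a = k_b = 0$, so $a = w_a, b = w_b \in \Ap(S,m)$, and neither can be $0$ since $a,b > 0$. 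Conversely, any decomposition $w = w_a + w_b$ with $w_a, w_b \in \Ap(S,m)\backslash\{0\}$ witnesses that $w$ is not a minimal generator. Hence $S$ is $\MED$ if and only if no element of $\Ap(S,m)\backslash\{0\}$ is a sum of two elements of $\Ap(S,m)\backslash\{0\}$, and combining with the previous paragraph gives the stated equivalence. The main obstacle is this last Ap\'ery-minimality step: one must verify that an arbitrary decomposition $w = a+b$ in $S\backslash\{0\}$ can always be refined to one inside $\Ap(S,m)\backslash\{0\}$, and in particular rule out $a = m$ or $b = m$, which is possible precisely because $w \in \Ap(S,m)$ means $w - m \notin S$.
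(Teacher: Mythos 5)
Your proof is correct. Note that the paper does not actually prove this lemma: it simply cites \cite[Proposition I.2.9]{barucci}, so there is no internal argument to compare against. What you supply is a complete, self-contained proof, and each step checks out: the reduction of ``$T=(S\setminus\{0\})+\{-\m(S)\}$ is a numerical semigroup'' to the closure condition $s_1+s_2-m\in S$ is valid (cofiniteness and $0\in T$ are immediate as you say); the case analysis on the Ap\'ery decompositions $s_i=w_i+k_im$ correctly isolates the only nontrivial case $k_1=k_2=0$, where the condition becomes $w_1+w_2\notin\Ap(S,m)$; and the identification of the $\MED$ property with $\msg(S)\setminus\{m\}=\Ap(S,m)\setminus\{0\}$ via the cardinality count $\sharp\Ap(S,m)=m$ is right, with the key point --- that any decomposition $w=a+b$ in $S\setminus\{0\}$ of an Ap\'ery element forces $k_a=k_b=0$ because otherwise $w-m\in S$ --- handled explicitly. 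Your route through the Ap\'ery set is essentially the standard textbook argument (it is how this characterization is proved in Rosales--Garc\'ia-S\'anchez, where $(S\setminus\{0\})-\m(S)$ being a semigroup is one of several equivalent formulations of $\MED$), and it has the advantage of using only Lemma \ref{lemma10} and the definition of $\Ap(S,m)$, making the paper's appeal to \cite{barucci} unnecessary.
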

The following result is deduced from \cite[Proposition 2]{belga}.

\begin{lemma}\label{lemma53} Let $S$ be a numerical semigroup and $m\in S\backslash \{0\}.$ Then $\left( \{m\}+S\right)\cup \{0\}$ is a $\MED$-semigroup with multiplicity $m$ and Frobenius number $\F(S)+m.$
	
\end{lemma}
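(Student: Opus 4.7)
Let me write $T = (\{m\}+S)\cup\{0\} = \{0\}\cup\{m+s \mid s\in S\}$. The plan is to verify the four assertions separately, in an order that lets each part feed the next.

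First I would check that $T$ is a numerical semigroup. It obviously contains $0$. For closure, the only non-trivial case is $a=m+s$ and $b=m+s'$ with $s,s'\in S$; since $m\in S$ and $S$ is closed under addition, $m+s+s'\in S$, so $a+b = m+(m+s+s')\in T$. For cofiniteness, note that if $k\ge \F(S)+m+1$ then $k-m\ge\F(S)+1$, hence $k-m\in S$ and $k\in T$; therefore $\N\setminus T$ is finite.

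Next I would read off $\m(T)$ and $\F(T)$ directly from the description of $T$. The nonzero elements of $T$ are exactly $\{m+s \mid s\in S\}$, whose minimum is $m+0=m$, so $\m(T)=m$. For the Frobenius number, $\F(S)+m\notin T$ because otherwise $\F(S)+m=m+s$ for some $s\in S$, forcing $\F(S)\in S$, which is false; combined with the cofiniteness argument above, $\F(T)=\F(S)+m$.

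Finally I would prove that $T$ is a $\MED$-semigroup by invoking Lemma \ref{lemma52}. Compute
\[
(T\setminus\{0\})+\{-m\} = \bigl(\{m+s \mid s\in S\}\bigr)+\{-m\} = S,
\]
which is a numerical semigroup by hypothesis. Thus Lemma \ref{lemma52} forces $T$ to have maximal embedding dimension, completing the proof.

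I don't expect a hard obstacle here: everything reduces to unwinding the definition of $T$ and applying Lemma \ref{lemma52}. The most delicate bookkeeping is the $\F(T)=\F(S)+m$ step, where one must be sure that no smaller gap above $m$ survives translation — but this is immediate since every integer of the form $m+k$ with $k>\F(S)$ lies in $T$, and every $m+k$ with $0\le k\le \F(S)$ lies in $T$ iff $k\in S$, so the gaps of $T$ above $m$ are exactly $m+(\N\setminus S)$ and their maximum is $m+\F(S)$.
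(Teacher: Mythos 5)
Your proof is correct. Note, however, that the paper does not actually prove this lemma: it simply cites it as a consequence of Proposition~2 of \cite{belga}, so your argument is a genuinely self-contained alternative. The verification of closure, cofiniteness, multiplicity and Frobenius number is exactly the routine unwinding one would expect, and your handling of the maximal-embedding-dimension claim is the nicest part: rather than exhibiting the minimal generating set of $T=(\{m\}+S)\cup\{0\}$ explicitly (which is what the cited reference essentially does, identifying $\msg(T)$ with $\{m+w\mid w\in \Ap(S,m)\}$), you observe that $(T\setminus\{0\})+\{-\m(T)\}=S$ and invoke the paper's own Lemma~\ref{lemma52}, which settles the $\MED$ property in one line. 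The only caveat, which is a defect of the statement rather than of your proof, is the degenerate case $S=\N$ and $m=1$: there $T=\N$, and with the convention $\F(\N)=-1$ the asserted formula $\F(T)=\F(S)+m$ fails (your step ``$\F(S)+m\notin T$'' silently assumes $\F(S)+m\neq 0$). For every other choice of $S$ and $m$ your argument, including the bookkeeping that the gaps of $T$ above $m$ are exactly $m+(\N\setminus S)$, goes through verbatim.
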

\begin{proposition}\label{proposition54}
	Let $S$ be a numerical semigroup, $m\in S\backslash \{0\}$ and $F=\F(S)+2m.$ Then $T=\left( \{m\}+S\right)\cup \{0\}$ is a $\MED$-semigroup and $\max(\msg(T))=F.$ Moreover, if $P$ is a $\MED$-semigroup and $\max(\msg(P))=F,$ then $P$ has this form. 
\end{proposition}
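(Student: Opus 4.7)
The plan is to prove the two directions separately, in each case exploiting the characterization of MED-semigroups from Lemmas \ref{lemma49}, \ref{lemma52} and the construction in Lemma \ref{lemma53}.

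For the forward direction, I would invoke Lemma \ref{lemma53} directly applied to $S$ and $m \in S\setminus\{0\}$, which immediately yields that $T = (\{m\}+S)\cup\{0\}$ is a $\MED$-semigroup with $\m(T) = m$ and $\F(T) = \F(S)+m$. Then Lemma \ref{lemma49} gives $\max(\msg(T)) = \F(T) + \m(T) = \F(S) + m + m = \F(S)+2m = F$, which finishes this half.

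For the reverse direction, let $P$ be an arbitrary $\MED$-semigroup with $\max(\msg(P)) = F$ and set $m := \m(P)$. The natural candidate for the semigroup $S$ is $S := (P\setminus\{0\}) + \{-m\}$, which is a numerical semigroup by Lemma \ref{lemma52} (the ``only if'' direction). Since $2m \in P\setminus\{0\}$, we have $m = 2m-m \in S$, so $m \in S\setminus\{0\}$ as required. The identity
\[
(\{m\}+S)\cup\{0\} = \{m\} + \bigl((P\setminus\{0\}) + \{-m\}\bigr) \cup \{0\} = (P\setminus\{0\}) \cup \{0\} = P
\]
shows that $P$ has the claimed form with respect to $S$ and $m$.

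It remains to check the equality $F = \F(S)+2m$. By Lemma \ref{lemma53}, the semigroup $P = (\{m\}+S)\cup\{0\}$ has Frobenius number $\F(P) = \F(S)+m$. On the other hand, since $P$ is itself a $\MED$-semigroup, Lemma \ref{lemma49} gives $\F(P) = \max(\msg(P)) - \m(P) = F - m$. Equating the two expressions for $\F(P)$ yields $F - m = \F(S)+m$, i.e.\ $F = \F(S)+2m$, closing the argument. The only place where one has to be slightly careful is the bookkeeping verifying that the $S$ reconstructed from $P$ truly satisfies $m\in S\setminus\{0\}$ and that $P$ is recovered exactly as $(\{m\}+S)\cup\{0\}$; everything else is a direct application of the preceding lemmas.
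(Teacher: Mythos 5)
Your proposal is correct and follows essentially the same route as the paper: Lemma \ref{lemma53} plus Lemma \ref{lemma49} for the forward direction, and Lemma \ref{lemma52} to reconstruct $S=(P\setminus\{0\})+\{-\m(P)\}$ for the converse, with the same bookkeeping to get $F=\F(S)+2m$. Your explicit check that $m=2m-m\in S\setminus\{0\}$ is a small detail the paper leaves implicit, but the argument is identical in substance.
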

\begin{proof}
	\begin{itemize}
	\item By Lemma \ref{lemma53}, we know that $T$ is a $\MED$-semigroup with multiplicity $m$ and Frobenius number $\F(S)+m.$ By using  Lemma \ref{lemma49}, we have that $\max(\msg(T))=\F(S)+m+m=F.$
	\item By Lemma \ref{lemma52}, we know that $S=(P\backslash \{0\})+\{-\m(P)\}$ is a numerical semigroup and $\m(P)\in S\backslash \{0\}.$ Moreover, by applying Lemma \ref{lemma49}, we have that $\F(S)=F-2\m(P).$ Finally, it is clear that $P=\left( \{\m(P)\}+S\right)\cup \{0\}.$	
	\end{itemize}
\end{proof}
\begin{example}\label{example}
	Let $S=\langle 5,7,9 \rangle=\{0,5,7,9,10,12,14,\rightarrow\}.$ Then, by applying Proposition \ref{proposition54}, we have that $T=\left( \{7\}+S\right)\cup \{0\}$ is a $\MED$-semigroup and $\max(\msg(T))=\F(S)+2\cdot 7=13+14=27.$ Therefore, by applying Proposition \ref{proposition51}, we obtain that $T\backslash \{27\}$ is a $\M\sA(F)\rR$-semigroup.
\end{example}

The following result is deduced from \cite[Corollary 8]{belga}.

\begin{lemma}\label{lemma56}
	Let $S$ be a $\MED$-semigroup. Then 
	$$
	 \g(S)=\displaystyle \frac{1}{\m(S)}\left(\sum_{x\in \msg(S)\backslash \{\m(S)\}}x\right)-\frac{\m(S)-1}{2}.
	$$	
\end{lemma}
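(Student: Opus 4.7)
The plan is to reduce Lemma \ref{lemma56} to the classical identity
$$
\g(S)\;=\;\frac{1}{\m(S)}\sum_{w\in \Ap(S,\m(S))} w\;-\;\frac{\m(S)-1}{2}, \qquad (\ast)
$$
valid for every numerical semigroup $S$, and then, under the $\MED$ hypothesis, to identify $\Ap(S,\m(S))$ with $\{0\}\cup(\msg(S)\setminus\{\m(S)\})$.

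First, I would establish $(\ast)$ by counting gaps one residue class at a time. Setting $m=\m(S)$ and writing $\Ap(S,m)=\{0=w(0),w(1),\dots,w(m-1)\}$ as in Lemma \ref{lemma10}, the elements of $S$ congruent to $i$ modulo $m$ are exactly $w(i),w(i)+m,w(i)+2m,\dots$. Hence the gaps of $S$ congruent to $i$ modulo $m$ are $\{i,i+m,\dots,w(i)-m\}$ for $i\neq 0$ (and none for $i=0$), giving $(w(i)-i)/m$ gaps in that class. Summing over $i\in\{1,\dots,m-1\}$ and using $\sum_{i=1}^{m-1} i=m(m-1)/2$, together with $w(0)=0$, produces $(\ast)$.

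Next, assuming $S$ is a $\MED$-semigroup, I would prove $\Ap(S,\m(S))=\{0\}\cup(\msg(S)\setminus\{\m(S)\})$. For the inclusion $\supseteq$, take $x\in\msg(S)\setminus\{\m(S)\}$. By Lemma \ref{lemma3}, $x>\m(S)$; and if $x-\m(S)$ belonged to $S$, then $x=\m(S)+(x-\m(S))$ would express $x$ as a sum of two positive elements of $S$, contradicting $x\in\msg(S)$. Hence $x-\m(S)\notin S$, so $x\in\Ap(S,\m(S))$. The reverse inclusion is a cardinality count: Lemma \ref{lemma10} gives $|\Ap(S,\m(S))|=\m(S)$, while the $\MED$ condition $\e(S)=\m(S)$ gives $|\{0\}\cup(\msg(S)\setminus\{\m(S)\})|=1+(\e(S)-1)=\m(S)$ as well.

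Substituting this description of $\Ap(S,\m(S))$ into $(\ast)$ yields the stated formula. The main technical content lies in the residue-class bookkeeping behind $(\ast)$, which is a standard Selmer-type computation; the $\MED$ identification of the Ap\'ery set is essentially built into the definition and poses no substantial difficulty.
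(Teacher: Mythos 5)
Your argument is correct and complete. Note, however, that the paper does not actually prove Lemma \ref{lemma56}: it simply imports the statement from the reference \cite{belga} (Corollary 8 there), so there is no internal proof to compare against. What you have supplied is the standard self-contained derivation: first Selmer's formula $\g(S)=\frac{1}{n}\sum_{w\in \Ap(S,n)}w-\frac{n-1}{2}$, obtained by counting the gaps in each nonzero residue class modulo $n$ as $(w(i)-i)/n$ (your count of the arithmetic progression $i, i+n,\dots,w(i)-n$ is right, and the case $i=0$ contributes nothing since $w(0)=0$); second, the identification $\Ap(S,\m(S))=\{0\}\cup\bigl(\msg(S)\setminus\{\m(S)\}\bigr)$ for $\MED$-semigroups, where the inclusion $\supseteq$ holds for every numerical semigroup (a minimal generator $x\neq \m(S)$ satisfies $x-\m(S)\notin S$) and equality follows from the cardinality match $\sharp\Ap(S,\m(S))=\m(S)=\e(S)=\sharp\bigl(\{0\}\cup(\msg(S)\setminus\{\m(S)\})\bigr)$ supplied by the $\MED$ hypothesis. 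Both steps are watertight, and substituting the second into the first gives exactly the claimed formula. Your route is essentially the same computation that underlies the cited result, so the only thing your proof ``buys'' is self-containment: the paper's version buys brevity at the cost of an external dependency, while yours makes the lemma verifiable from Lemma \ref{lemma10} and Lemma \ref{lemma3} alone.
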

\begin{proposition}\label{proposition57}
	Let $S$ be a $\M\sA(F)\rR$-numerical semigroup and let $A$ be its $\sA(F)$-minimal  system of generators. Then $$
	\g(S)=\displaystyle \frac{1}{\m(S)}\left(\sum_{a\in (A\backslash \{\m(S)\})\cup \{F\}}a\right)-\frac{\m(S)-3}{2}.
	$$
	
\end{proposition}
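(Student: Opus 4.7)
The plan is to reduce the computation of $\g(S)$ to an application of the genus formula for $\MED$-semigroups given in Lemma \ref{lemma56}, via the bijection between $\M\sA(F)\rR$-semigroups and certain $\MED$-semigroups established in Proposition \ref{proposition51}.

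First I would set $T = S \cup \{F\}$. By Proposition \ref{proposition51}, $T$ is a $\MED$-semigroup with $F = \max(\msg(T))$; moreover, from the argument in the proof of that proposition (which uses Proposition \ref{proposition28}) we know that the $\sA(F)$-minimal system of generators $A$ of $S$ satisfies $A = \msg(T) \setminus \{F\}$, so $\msg(T) = A \cup \{F\}$. Since $\m(T) = \min(\msg(T)) = \min(A) = \m(S)$ (note $F > \m(S)$), we also have $\msg(T) \setminus \{\m(T)\} = (A\setminus\{\m(S)\}) \cup \{F\}$.

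Next I would apply Lemma \ref{lemma56} to the $\MED$-semigroup $T$ to obtain
\begin{equation*}
\g(T) = \frac{1}{\m(S)}\left(\sum_{a \in (A\setminus\{\m(S)\})\cup\{F\}} a\right) - \frac{\m(S)-1}{2}.
\end{equation*}
Finally, since $S = T\setminus\{F\}$ and $F \notin S$ but $F \in T$, the gap sets satisfy $\N\setminus S = (\N\setminus T) \cup \{F\}$ (a disjoint union), so $\g(S) = \g(T) + 1$. Substituting and simplifying $-\frac{\m(S)-1}{2} + 1 = -\frac{\m(S)-3}{2}$ yields the claimed formula.

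The only step that requires real care is the identification $\msg(T) = A \cup \{F\}$. This is essentially packaged in the proof of Proposition \ref{proposition51}, but one should verify that adding back $F$ to $S$ does not demote any element of $A$ from being a minimal generator: this holds because every $a \in A$ satisfies $a < F$, hence $a$ cannot be expressed as $F + t$ with $t \in T$. The rest is a routine arithmetic rewriting.
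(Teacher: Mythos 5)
Your proposal is correct and follows essentially the same route as the paper's own proof: pass to the $\MED$-semigroup $T=S\cup\{F\}$ via Proposition \ref{proposition51}, identify $\msg(T)=A\cup\{F\}$ via Proposition \ref{proposition28}, apply Lemma \ref{lemma56}, and use $\g(S)=\g(T)+1$. Your extra verification that no element of $A$ loses minimality when $F$ is adjoined is a welcome detail the paper leaves implicit.
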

\begin{proof}
	From Proposition \ref{proposition51}, we can assert  that $S\cup \{F\}$ is a $\MED$-semigroup. By applying Proposition \ref{proposition28}, we have that $\msg(S\cup \{F\})=A\cup \{F\}.$ As $\g(S)=\g\left(S\cup \{F\}\right)+1,$ then by applying Lemma \ref{lemma56}, we obtain the desired  result. 
\end{proof}
\begin{example}
	Let $S=\sA(15)\left( \{8,9,10,11,12,13,14\}\right).$ Then, by applying Proposition \ref{proposition57}, we have that 
	$$
	\g(S)=\displaystyle \frac{1}{8}\left(9+10+11+12+13+14+15\right)-\frac{8-3}{2}=8.
	$$
	
\end{example}

\end{document}